\documentclass[12pt]{amsart}

\usepackage{amssymb,stmaryrd}
\usepackage{rotating}
%\usepackage{rotating, amsaddr}
%\usepackage{multirow}
%\usepackage{tikz} % Optional PGF libraries 
%\usepackage{pgflibraryarrows} \usepackage{pgflibrarysnakes}
%\usetikzlibrary{chains}
%\usepackage{eepic}

%\usepackage[active,tightpage]{preview}
%\PreviewEnvironment{tikzpicture}

%\usepackage[paperheight=40cm,scale=.97]{geometry}

\usepackage{mathtools}

\usepackage{enumerate,pdflscape}
\usepackage{color}

%\usepackage{draftwatermark}
%\SetWatermarkText{In Progress}
%\SetWatermarkScale{0.85}

  % \newcommand{\sll}{\mathfrak{sl}_3}
   \newcommand{\sll}{  A_2   }

 \newcommand{\la}{\mathfrak{g}}

  \theoremstyle{definition}
  \newtheorem{definition}{Definition}[section]
  
   \newtheorem{remark}[definition]{Remark}
    \AtEndEnvironment{remark}{\null\hfill\qedsymbol}
  \theoremstyle{plain}

  \theoremstyle{plain}
  \newtheorem{lemma}[definition]{Lemma}
  
  \newtheorem{theorem}[definition]{Theorem}

\title[The subalgebras of $A_2$]{The  subalgebras of $A_2$}

\begin{document}

\author[Andrew Douglas]{Andrew Douglas$^{1,2}$}
\address[]{$^1$Department of Mathematics, New York City College of Technology, City University of New York, Brooklyn, NY, USA.}
\address[]{$^2$Ph.D. Programs in Mathematics and Physics, CUNY Graduate Center, City University of New York, New York, NY, USA.}
%\email{adouglas2@gc.cuny.edu}

\author[Joe Repka]{Joe Repka$^3$}
\address{$^3$Department of Mathematics, University of Toronto, Toronto, ON, M5S 2E4, Canada}
\email{adouglas@citytech.cuny.edu, repka@math.toronto.edu}

\date{\today}

\keywords{Levi decomposable subalgebras, solvable subalgebras, classification of subalgebras, special linear algebra} 
\subjclass[2010]{17B05, 17B10, 17B20, 17B30}

\begin{abstract}
A classification of the semisimple subalgebras of the Lie algebra of traceless $3\times 3$ matrices with complex entries, denoted $A_2$,  is well-known.   We classify  its nonsemisimple subalgebras, thus completing the classification of the subalgebras of $\sll$.   
\end{abstract}

\maketitle

\section{Introduction}

The study of subalgebras of semisimple Lie algebras has largely focused on semsimple subalgebras.  Notable examples include the work of Dynkin \cite{dynkin}, and Minchenko \cite{min}, which combine to classify the simple subalgebras of the exceptional Lie algebras, up to inner automorphism.  Another important example is de Graaf's classification of the semisimple subalgebras of the simple Lie algebras of ranks $\le 8$, up to linear equivalence, which is somewhat weaker than a classification up to inner automorphism \cite{degraafc, degraafd}.

Less is known about nonsemisimple subalgebras of semisimple Lie algebras.  Levi's Theorem [\cite{levi}, Chapter III, Section 9] implies that subalgebras must be either semisimple, solvable, or 
Levi decomposable.  A subalgebra is  Levi decomposable if it is a semidirect or direct 
sum of a semisimple subalgebra and a solvable subalgebra.

We have made considerable progress towards classifying solvable and Levi decomposable subalgebras of semisimple Lie algebras.  For instance, in \cite{dkr}, we classified the abelian extensions of the special orthogonal Lie algebras $\mathfrak{so}(2n,\mathbb{C})$ in the exceptional Lie algebras $E_{n+1}$, up to inner automorphism. In \cite{abcd} we classified   an important family of Levi decomposable subalgebras of the classical Lie algebras. Most recently, we classified  all subalgebras of $A_1 \oplus A_1$, up to inner automorphism  \cite{drsof}.  $A_1$ is the special Lie algebra of traceless $2\times 2$ matrices with complex entries.

In the present article, we seek to extend the knowledge of subalgebras, especially nonsemisimple subalgebras, of semisimple Lie algebras.  In particular, we 
classify the solvable, and Levi decomposable subalgebras of the simple Lie algebra $A_2$ of traceless $3\times 3$ matrices with complex entries. Since its semisimple subalgebras are well-known \cite{degraafc, degraafd}, Levi's Theorem implies that the classification of the subalgebras of $A_2$  will be complete. The classification is up to inner automorphism.

The article is organized as follows.  Section \ref{sl} contains basic review of the simple Lie algebra $\sll$ and its semisimple subalgebras.  We describe a classification of solvable Lie algebras for degrees $\leq 3$ in Section \ref{solvable} that will be used in a proceeding section.
Additional notation and terminology that will be used in the article are recorded in Section \ref{extras}.   In Section \ref{levi}  we classify the Levi decomposable subalgebras of $\sll$.  Finally, in Section \ref{solvvv}, we classify the solvable subalgebras of $\sll$. 

\section{The simple Lie algebra $\sll$}\label{sl}

The special linear algebra $\sll$ is the Lie algebra of traceless $3 \times 3$ matrices with complex entries.     A  Chevalley basis $\{ x_i, y_i ,  h_j:  1\leq i \leq 3, 1 \leq j \leq 2 \}$ of $\sll$ is defined as follows:
\begin{equation}
\begin{array}{l}
\displaystyle  ah_1+ bh_2+  cx_1+ dx_2 +ex_3+  c'y_1+ d'y_2+ e'y_3\\=  \left(
\begin{array}{rrr}
 a & c & -e   \\
 c' & b-a & d   \\
 -e'& d' &-b  
\end{array}
\right).
\end{array} 
\end{equation}
where $a, b, c, d, e, c',d',e' \in \mathbb{C}$.

From \cite{degraafc,degraafd}, there are precisely two semisimple subalgebras of $\sll$ up to linear equivalence, both isomorphic to the special linear algebra $A_1$ of traceless $2\times 2$ matrices with complex entries.  By [\cite{min}, Theorem $3.3$], this is also true up to inner automorphism.  They are
\begin{equation}\label{semiss}
 \arraycolsep=1.4pt\def\arraystretch{1.3}
\begin{array}{llllllll}
A_{1}^1&\equiv&
\langle x_3, y_3, h_1+h_2   \rangle,\\
A_{1}^2&\equiv&
\langle x_1+x_2, 2y_1+2y_2, 2h_1+2h_2   \rangle.
\end{array}
\end{equation}

\section{Solvable Lie algebras of small dimension}\label{solvable}

A full classification of solvable Lie algebras is not known and thought to be an impossible task.  However, classifications of solvable Lie algebras in special cases have been considered
(e.g., \cite{degraafa, mubar, wintera, patera, snobl, wintb}).  For instance, de Graaf classified the solvable Lie algebras in dimension $\leq 4$ over fields of any characteristic \cite{degraafa}.  

With $\mathbb{C}$ as the ground field, we describe this classification in totality for dimension $\leq 3$,  and  describe that portion of the dimension $4$ classification which is relevant to the present article.  In each case, $z_1, z_2,...,z_n$ are basis elements of the $n$-dimensional, solvable Lie algebra being described, and all nonzero commutation relations for that algebra are presented.
\begin{equation}\label{degall}
 \arraycolsep=1.4pt\def\arraystretch{1.3}
\begin{array}{llll}
J & \text{The abelian Lie algebra of dimension $1$}\\
\end{array}
\end{equation}
\begin{equation} \arraycolsep=1.4pt\def\arraystretch{1.3}
\begin{array}{llll}
K_1 & \text{The abelian Lie algebra of dimension $2$}\\
K_2 & [z_1,z_2]=z_1 \\
\end{array}
\end{equation}
\begin{equation}\label{Eq:deg3}
 \arraycolsep=1.4pt\def\arraystretch{1.3}
\begin{array}{llll}
L_1 & \text{The abelian Lie algebra of dimension $3$}\\
L_2 & [z_3,z_1]=z_1, [z_3,z_2]=z_2 \\
L_{3,a} & [z_3,z_1]=z_2, [z_3,z_2]=az_1+z_2 \\
L_{4} & [z_3,z_1]=z_2, [z_3,z_2]=z_1 \\
L_{5} & [z_3,z_1]=z_2
\end{array}
\end{equation}
\begin{equation}\label{Eq:deg32}
 \arraycolsep=1.4pt\def\arraystretch{1.3}
\begin{array}{llll}
M_8 & [z_1, z_2]=z_2, [z_3, z_4]=z_4\\
M_{12} & [z_4, z_1]=z_1, [z_4, z_2]=2z_2, [z_4, z_3]=z_3, [z_3, z_1]=z_2\\
M_{13, a} & [z_4, z_1]=z_1+az_3, [z_4, z_2]=z_2, [z_4, z_3]=z_1, [z_3, z_1]=z_2\\
M_{14} & [z_4, z_1]=z_3, [z_4, z_3]=z_1, [z_3, z_1]=z_2\\
\end{array}
\end{equation}
Note that we get a nonisomorphic solvable Lie algebra $L_{3,a}$ for each $a\in \mathbb{C}$; and a
 nonisomorphic solvable Lie algebra $M_{13,a}$ for each $a\in \mathbb{C}$.

\section{Additional definitions and notation}\label{extras}

\begin{itemize}
\item Let $\la$ be a Lie algebra, then $Aut(\la)$ is the group of automorphisms of $\la$, $Inn(\la)$ is the group of inner automorphisms
of $\la$, and $Der(\la)$ is the Lie algebra of derivations of $\la$.
\item Let $\la$ and $\la'$ be Lie algebras, and $\varphi: \la \rightarrow Der(\la')$ a Lie algebra homomorphism.  Consider the Levi decomposable algebra $\la \inplus_{\varphi} \la'$ with $x\in \la$ and $y\in \la'$.  Then, $[x,y]\equiv \varphi(x)(y)$.
\item Let  $W_1,..., W_m \in \sll$. Then, 
\[
\langle W_1,..., W_m \rangle_{}
\]
is the subalgebra of $\sll$ generated by $W_1$,...,$W_m$.   In this article, $W_1$,...,$W_m$ will generally not be a minimal generating set.

\item Let $\varphi$ and $\varrho$ be Lie algebra embeddings of $\la'$ into $\mathfrak{g}$.  If $\varphi$ and $\varrho$ are  equivalent up to inner automorphism we write
$\varphi \sim \varrho$.
\item Two embeddings $\varphi$ and $\varrho$  of  $\la'$ into $\la_{}$ are \emph{linearly equivalent} if for each representation 
 $\pi: \la \rightarrow \mathfrak{gl}(V)$ the induced $\la'$-representations $\pi \circ \varphi$, $\pi \circ \varrho$ are equivalent, and we write
$\varphi \sim_L \varrho$.

\end{itemize}

We define linear equivalence of subalgebras much as we did for embeddings.
\begin{itemize}
\item  Two subalgebras $\la'$ and $\la''$ of $\la$ are linearly equivalent if for every representation $\pi:\la \rightarrow \mathfrak{gl}(V)$ the subalgebras $\rho(\la')$, $\rho(\la'')$ of $\mathfrak{gl}(V)$ are conjugate under $\mathrm{GL}(V)$.
Clearly if two embeddings or subalgebras are equivalent up to inner automorphism, then they are linearly equivalent.  But, the converse is not in general true.   

\end{itemize}

\section{The Levi decomposable subalgebras of $\sll$}\label{levi}

To determine the Levi decomposable  subalgebras of $\sll$, we first decompose $\sll$ with respect to the adjoint actions of $A^1_1$ and $A^2_1$, respectively:

\begin{equation}\label{acc3}
 \arraycolsep=1.4pt\def\arraystretch{1.3}
\begin{array}{ccccccccccccc}
\sll &\cong_{A^1_1}& \langle x_3, y_3, h_1+h_2\rangle&\oplus& \langle x_2, y_1 \rangle &\oplus&
\langle x_1, y_2 \rangle&\oplus& \\ &&\langle  h_1-h_2 \rangle  \\
&  \cong_{A^1_1} & V_{}(2) &\oplus&V_{}(1)&\oplus& V_{}(1) &\oplus&V_{}(0),
\end{array}
\end{equation}
\begin{equation}\label{ac3}
 \arraycolsep=1.4pt\def\arraystretch{1.3}
\begin{array}{ccccccccccccccccc}
\sll &\cong_{A^2_1}&\langle x_1+x_2,2 y_1+2y_2, 2h_1+2h_2   \rangle&\oplus&\\&&  \langle x_3, x_1-x_2,h_1-h_2,y_1-y_2, y_3\rangle \\
&  \cong_{A^2_1} & V_{}(2) &\oplus&V_{}(4),
\end{array}
\end{equation}
where $V(n)$ is the $n+1$ dimensional, irreducible representation of $A_1$.  In each decomposition $V(2)\cong A_1$ as a subalgebra of  $A_2$.  The remaining representations in the decompositions, or combinations thereof, 
give us the potential solvable components for Levi decomposable subalgebras.

\begin{lemma}\label{lem1}
Let  $\psi: \la \inplus \mathfrak{s} \rightarrow \mathfrak{h} \inplus \mathfrak{r}$ be a Lie algebra isomorphism, where  $\mathfrak{g}$  and  $\mathfrak{h}$  are semisimple and  $\mathfrak{s}$  and  $\mathfrak{r}$  are solvable.
Then $\psi(\mathfrak{s}) = \mathfrak{r}$.
\end{lemma}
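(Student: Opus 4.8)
The plan is to exploit the fact that in any Levi decomposition of a Lie algebra, the radical (maximal solvable ideal) is an intrinsic object, and that in a decomposition $\la \inplus \rr$ with $\la$ semisimple and $\rr$ solvable, the solvable part $\rr$ is precisely this radical. First I would argue that $\mathfrak{s}$ is the radical of $\la \inplus \mathfrak{s}$: it is a solvable ideal by construction, and the quotient $(\la \inplus \mathfrak{s})/\mathfrak{s} \cong \la$ is semisimple, hence has no nonzero solvable ideal, so $\mathfrak{s}$ is maximal among solvable ideals. The same argument shows $\mathfrak{r}$ is the radical of $\mathfrak{h} \inplus \mathfrak{r}$. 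Since the radical is preserved by any Lie algebra isomorphism, $\psi(\mathfrak{s})$ is the radical of $\mathfrak{h} \inplus \mathfrak{r}$, which is $\mathfrak{r}$; therefore $\psi(\mathfrak{s}) = \mathfrak{r}$.

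The key steps, in order, are: (1) recall or cite that the radical of a Lie algebra is the unique maximal solvable ideal, and that it is characteristic, hence mapped to the radical under any isomorphism; (2) verify that $\mathfrak{s}$ is an ideal of $\la \inplus \mathfrak{s}$ — this is immediate from the definition of semidirect (or direct) sum, since $[\la, \mathfrak{s}] \subseteq \mathfrak{s}$ and $[\mathfrak{s},\mathfrak{s}] \subseteq \mathfrak{s}$; (3) show $\mathfrak{s}$ is the \emph{whole} radical: if $\mathfrak{t} \supsetneq \mathfrak{s}$ were a larger solvable ideal, its image in $(\la \inplus \mathfrak{s})/\mathfrak{s} \cong \la$ would be a nonzero solvable ideal of the semisimple algebra $\la$, a contradiction; (4) apply the same reasoning to $\mathfrak{h} \inplus \mathfrak{r}$; (5) conclude via functoriality of the radical under $\psi$.

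I do not expect any serious obstacle here; the statement is essentially a packaging of the uniqueness of the Levi decomposition's radical. The only point requiring a little care is step (3), making sure the containment is handled correctly: one wants that any solvable ideal of $\la \inplus \mathfrak{s}$ is contained in $\mathfrak{s}$, which follows because its image under the projection onto $\la$ is a solvable ideal of $\la$ and hence zero. A reader might alternatively prefer the argument that $\la \inplus \mathfrak{s}$, being a semidirect sum of a semisimple and a solvable algebra, \emph{is} its own Levi decomposition, so its radical is $\mathfrak{s}$ by definition of Levi decomposition; either phrasing works, and I would include whichever is shortest given the conventions already fixed in the paper.
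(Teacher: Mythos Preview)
Your proof is correct and uses essentially the same idea as the paper: both arguments hinge on projecting a solvable ideal onto the semisimple factor and observing that the image must vanish. The only cosmetic difference is that you package this as ``$\mathfrak{s}$ is the radical, and the radical is preserved by isomorphisms,'' whereas the paper argues the inclusion $\psi(\mathfrak{s}) \subseteq \mathfrak{r}$ directly via the projection $\pi: \mathfrak{h} \inplus \mathfrak{r} \to \mathfrak{h}$ and then closes with a dimension count.
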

\begin{proof}
Let $\pi: \mathfrak{h} \inplus \mathfrak{r} \rightarrow  \mathfrak{h}$ be the projection map of  $\mathfrak{h} \inplus \mathfrak{r}$ onto  $\mathfrak{h}$.  Then,
$\pi( \psi(\mathfrak{s}))$ is a solvable ideal of $\mathfrak{h}$.   Since  $\mathfrak{h}$ is semisimple, $\pi( \psi(\mathfrak{s}))=0$.  Hence,  $\psi(\mathfrak{s})\subseteq \mathfrak{r}$.  The Levi factor in a Levi decomposition is unique, up to isomorphism (i.e., $\la \cong\mathfrak{h}$).  Hence, dimension considerations imply $\psi(\mathfrak{s}) = \mathfrak{r}$.
\end{proof}

\begin{theorem}\label{les2}
A classification of Levi decomposable subalgebras of $A_2$, up to inner automorphism, is given by: 
\begin{equation}
 \arraycolsep=1.4pt\def\arraystretch{1.5}
\begin{array}{lllll}
\langle x_3, y_3, h_1+h_2\rangle \oplus \langle h_1-h_2\rangle &\cong& A_1 \oplus J,\\
 \langle x_3, y_3, h_1+h_2\rangle \inplus \langle x_1, y_2 \rangle &\cong& A_1 \inplus_{\varphi_1} K_1, \\
 \langle x_3, y_3, h_1+h_2\rangle \inplus \langle x_2,  y_1\rangle &\cong& A_1 \inplus_{\varphi_1} K_1,\\
 \langle x_3, y_3, h_1+h_2\rangle \inplus \langle h_1-h_2,x_1, y_2 \rangle &\cong& A_1 \inplus_{\varphi_2} L_{2},\\
 \langle x_3, y_3, h_1+h_2\rangle \inplus \langle h_1-h_2,x_2,  y_1\rangle &\cong& A_1 \inplus_{\varphi_2} L_{2},
\end{array}
\end{equation}
where the Lie algebra homomorphisms $\varphi_1: A_1 \rightarrow Der(K_1)$; and 
$\varphi_2: A_1 \rightarrow Der(L_{2})$ are defined in the proof below.
\end{theorem}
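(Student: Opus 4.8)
The plan is to pin down the Levi factor using the known classification of the semisimple subalgebras of $\sll$, and then to realize the solvable radical as a submodule of $\sll$ for the adjoint action of that Levi factor. Suppose $L\subseteq\sll$ is Levi decomposable, $L=\mathfrak{s}_0\inplus R$ with $\mathfrak{s}_0$ semisimple and $R=\operatorname{rad}(L)$. By Section~\ref{sl}, $\mathfrak{s}_0$ is conjugate under an inner automorphism of $\sll$ to $A_1^1$ or to $A_1^2$, so after applying that automorphism we may assume $\mathfrak{s}_0\in\{A_1^1,A_1^2\}$. Since $R$ is an ideal of $L$ and $\mathfrak{s}_0\subseteq L$, we have $[\mathfrak{s}_0,R]\subseteq R$, i.e.\ $R$ is a submodule of $\sll$ for $\operatorname{ad}\mathfrak{s}_0$. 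As $R$ is solvable it cannot contain the simple subalgebra $V(2)\cong A_1$ (which is $\mathfrak{s}_0$ itself) appearing in~(\ref{acc3}) and~(\ref{ac3}); because $V(2)$ has multiplicity one there, complete reducibility forces $R$ to be a submodule of the sum of the remaining isotypic components, and $R=0$ gives only the (excluded) semisimple case.

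Next I would dispose of $\mathfrak{s}_0=A_1^2$. By~(\ref{ac3}) the remaining component is the irreducible module $V(4)=\langle x_3,x_1-x_2,h_1-h_2,y_1-y_2,y_3\rangle$, so $R=V(4)$. But $V(4)$ is not a subalgebra: the bracket restricts to an $A_1^2$-equivariant map $\wedge^2 V(4)\to\sll$, and since $\wedge^2 V(4)\cong V(6)\oplus V(2)$ has no $V(4)$-summand while $\sll\cong V(2)\oplus V(4)$ has no $V(6)$-summand, $[V(4),V(4)]\subseteq V(2)=A_1^2$; it is nonzero (e.g.\ $[x_3,y_3]=h_1+h_2$), hence $[V(4),V(4)]=A_1^2\not\subseteq V(4)$. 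So $A_1^2$ is the Levi factor of no Levi decomposable subalgebra.

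For $\mathfrak{s}_0=A_1^1$, by~(\ref{acc3}) $R$ is a submodule of $N=\langle h_1-h_2\rangle\oplus\langle x_1,y_2\rangle\oplus\langle x_2,y_1\rangle\cong V(0)\oplus V(1)\oplus V(1)$, and every submodule of $N$ is obtained by optionally adjoining $\langle h_1-h_2\rangle$ to $0$, to a line in the weight-one space $\langle x_1,x_2\rangle$ (each generating a copy of $V(1)$), or to all of $V(1)\oplus V(1)$; for each candidate, $\mathfrak{s}_0\inplus R$ is a subalgebra exactly when $[R,R]\subseteq R$, invariance under $\operatorname{ad}\mathfrak{s}_0$ being automatic. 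Using $[x_1,y_1]=h_1$, $[x_2,y_2]=h_2$, $[x_1,y_2]=[x_2,y_1]=0$, $[h_1-h_2,x_1]=3x_1$, $[h_1-h_2,y_2]=3y_2$, $[h_1-h_2,x_2]=-3x_2$, $[h_1-h_2,y_1]=-3y_1$, and $[x_3,h_1-h_2]=0$, one checks: a $V(1)$-submodule $\langle\alpha x_1+\beta x_2,\,-\alpha y_2+\beta y_1\rangle$ is closed only if $\alpha\beta=0$ (its self-bracket is $\alpha\beta(h_1-h_2)\notin R$); $V(1)\oplus V(1)$ and $\langle h_1-h_2\rangle\oplus V(1)\oplus V(1)$ are not closed ($h_1\notin R$); and $\langle h_1-h_2\rangle$ together with a diagonal $V(1)$ is not closed because $\operatorname{ad}(h_1-h_2)$ acts by $3$ on $\langle x_1,y_2\rangle$ and by $-3$ on $\langle x_2,y_1\rangle$ and so does not preserve a mixed line. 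The surviving radicals are $\langle h_1-h_2\rangle$, $\langle x_1,y_2\rangle$, $\langle x_2,y_1\rangle$, $\langle h_1-h_2,x_1,y_2\rangle$, $\langle h_1-h_2,x_2,y_1\rangle$, yielding the five algebras in the statement: $\langle h_1-h_2\rangle$ is centralized by $\mathfrak{s}_0$, giving $A_1\oplus J$; for $\langle x_1,y_2\rangle$ and $\langle x_2,y_1\rangle$, $R$ is abelian ($\cong K_1$) with $\mathfrak{s}_0$ acting as the standard $2$-dimensional module, which defines $\varphi_1\colon A_1\hookrightarrow\mathfrak{gl}(2)=Der(K_1)$; and for $\langle h_1-h_2,x_1,y_2\rangle$ and $\langle h_1-h_2,x_2,y_1\rangle$, rescaling $h_1-h_2$ identifies $R$ with $L_2$ (with $z_3$ a multiple of $h_1-h_2$ and $z_1,z_2$ spanning the copy of $V(1)$), and $\varphi_2\colon A_1\to Der(L_2)$ is the homomorphism acting by $0$ on $z_3$ and as the standard module on $\langle z_1,z_2\rangle$ (one checks directly that these maps are derivations of $L_2$).

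Finally I would verify the five are pairwise inequivalent under inner automorphisms: the dimensions $4,5,5,6,6$ separate the first algebra from the middle pair from the last pair, and within a pair one uses an $\operatorname{Ad}(GL_3)$-invariant of the radical, for instance $\dim\bigcap_{X\in R}\ker X$, which equals $2$ for $\langle x_1,y_2\rangle$ but $1$ for $\langle x_2,y_1\rangle$; for the last pair the nilradicals are $\langle x_1,y_2\rangle$ and $\langle x_2,y_1\rangle$, so the same invariant (together with Lemma~\ref{lem1}) distinguishes them. The only delicate point in the argument is the submodule bookkeeping in the $A_1^1$ case: one must not conclude that the $\mathbb{P}^1$-families of diagonal submodules produce subalgebras, and ruling them out requires checking $[R,R]$ and, for the three-dimensional candidates, the $\operatorname{ad}(h_1-h_2)$-action, rather than merely invariance under $\mathfrak{s}_0$.
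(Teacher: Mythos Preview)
Your proof is correct and follows the same overall strategy as the paper: both use the adjoint decompositions~(\ref{acc3}) and~(\ref{ac3}) to reduce the problem to identifying which $\operatorname{ad}\mathfrak{s}_0$-submodules of $\sll$ can serve as the radical, and both check closure under brackets to cut the $\mathbb{P}^1$-family of $V(1)$-submodules down to $\langle x_1,y_2\rangle$ and $\langle x_2,y_1\rangle$. The two substantive differences are in execution: for $A_1^2$ the paper simply observes that the only extension is $A_2$ itself, whereas you compute $[V(4),V(4)]=V(2)$ via $\wedge^2 V(4)\cong V(6)\oplus V(2)$; and for inequivalence within the $5$- and $6$-dimensional pairs the paper invokes Lemma~\ref{lem1} and then asserts that no conjugating $A\in SL(3,\mathbb{C})$ exists ``as one can show by direct calculation'', while you supply the explicit $GL_3$-conjugation invariant $\dim\bigcap_{X\in R}\ker X$ (equal to $2$ for $\langle x_1,y_2\rangle$ and $1$ for $\langle x_2,y_1\rangle$), which is a cleaner and self-contained replacement for that calculation.
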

\begin{proof}
The decomposition of $A_2$ with respect to the adjoint action of $A^2_1$ in Eq. \eqref{ac3} contains exactly two components:
$\langle x_1+x_2,2 y_1+2y_2, 2h_1+2h_2   \rangle$ $\cong$ $A_1$ and $ \langle x_3, x_1-x_2,h_1-h_2,y_1-y_2, y_3\rangle $.  Thus, the only nontrivial extension of $A_1^2$ in $A_2$ is $A_2$ itself.  This implies that
$A_1^2$ cannot be extended to a Levi decomposable subalgebra of $A_2$.  We now turn our attention to $A_1^1$.

In the decomposition of $A_2$ with respect to the adjoint action of $A^1_1$ in Eq. \eqref{acc3}, the component $\langle x_3, y_3, h_1+h_2\rangle$ is 
isomorphic to $A_1$, and the other components, or combinations thereof, give us the potential extensions of $\langle x_3, y_3, h_1+h_2\rangle$$\cong$ $A_1$.

The component $\langle h_1-h_2 \rangle$ is one-dimensional, and is therefore a subalgebra of $A_2$.  Since it is the only one-dimensional component,
the only one-dimensional extension of $\langle x_3, y_3, h_1+h_2\rangle$, up to inner automorphism, is
\begin{equation}
\langle x_3, y_3, h_1+h_2\rangle \oplus \langle h_1-h_2\rangle.
\end{equation}
Note that the sum above is direct.  We have 
\begin{equation}
\begin{array}{lllll}
A_1 \oplus J &\cong& \langle x_3, y_3, h_1+h_2\rangle \oplus \langle h_1-h_2\rangle.
\end{array}
\end{equation}

We now consider $2$-dimensional extensions of $A_1^1$ in $A_2$.   The possible $2$-dimensional extensions of $A^1_1$ are by an
irreducible $A_1$-representation $V(1)$ with respect to the adjoint action of $A_1^1$. From Eq. \eqref{acc3}, a highest weight vector 
of such a representation is a nonzero linear combination of $x_1$ and $x_2$.  However, only (a nonzero multiple of) $x_1$ or $x_2$ generates a representation isomorphic to 
$V(1)$ with respect to the adjoint action of $A^1_1$ which is also a 
$2$-dimensional subalgebra of $A_2$.  

Hence, the possible $2$-dimensional extensions of $A_1^1$ 
are  by $\langle x_1, y_2 \rangle$, or  $\langle x_2, y_1 \rangle$.  Note that both  $\langle x_1, y_2 \rangle$, and  $\langle x_2, y_1 \rangle$ are abelian subalgebras, and hence isomorphic to $K_1$.  We thus have (at most) two $5$-dimensional Levi decomposable subalgebras  of $A_2$:
\begin{equation}\label{not1}
 \arraycolsep=1.4pt\def\arraystretch{1.5}
\begin{array}{lllll}
\langle x_3, y_3, h_1+h_2\rangle \inplus \langle x_1, y_2 \rangle,&
\langle x_3, y_3, h_1+h_2\rangle \inplus \langle x_2,  y_1\rangle.
\end{array}
\end{equation}
These subalgebras are isomorphic.  The Chevalley involution of $A_2$ induces an isomorphism between $\langle x_3, y_3, h_1+h_2\rangle \inplus \langle x_1, y_2 \rangle$ and $\langle x_3, y_3, h_1+h_2\rangle \inplus \langle x_2,  y_1\rangle$.

Define
\begin{equation}
\begin{array}{llll}
\chi: &A_1 &\hookrightarrow& A_2\\
 & x & \mapsto & x_3\\
  & y & \mapsto & y_3\\
   & h & \mapsto & h_1+h_2,
\end{array}
\end{equation}
where $\{x, y, h \}$ is a Chevalley basis of $A_1$, and 
\begin{equation}
\begin{array}{ccccccccccccccccc}
\psi_1: &K_1 &\rightarrow& K_1 &\subseteq A_2, \\
 & z_1 & \mapsto & x_1 \\
  & z_2 & \mapsto & y_2 \\
\end{array}
\end{equation}
\begin{equation}
\begin{array}{rllll}
\varphi_1: &A_1 &\rightarrow& Der(K_1)\\
\varphi_1(L): &z_i &\mapsto& \psi_1^{-1} ( [\chi(L), \psi_1(z_i)]), i=1,2,
\end{array}
\end{equation}
then
\begin{equation}
 \arraycolsep=1.4pt\def\arraystretch{1.5}
\begin{array}{lllll}
A_1 \inplus_{\varphi_1} K_1 &\cong& \langle x_3, y_3, h_1+h_2\rangle \inplus \langle x_1, y_2 \rangle,\\
A_1 \inplus_{\varphi_1} K_1 &\cong& \langle x_3, y_3, h_1+h_2\rangle \inplus \langle x_2,  y_1\rangle.
\end{array}
\end{equation}

We now show that  $\langle x_3, y_3, h_1+h_2\rangle \inplus \langle x_1, y_2 \rangle$ and $\langle x_3, y_3, h_1+h_2\rangle \inplus \langle x_2,  y_1\rangle$   are inequivalent subalgebras of $A_2$.  By way of contradiction, suppose that they are equivalent.  Then, there exists $A\in \mathrm{SL}(3,\mathbb{C})$ such that
$A^{-1} \langle x_3, y_3, h_1+h_2\rangle \inplus \langle x_1, y_2 \rangle A$ $=$ $\langle x_3, y_3, h_1+h_2\rangle \inplus \langle x_2,  y_1\rangle$.

%%%%%%%%%%%%%%%%%%%%%%%%%%%%%%%%%%%%%%%%%%

By Lemma \ref{lem1}, $A^{-1} \langle x_1, y_2 \rangle A$ $=$ $\langle x_2,  y_1\rangle$.  However, no such $A\in \mathrm{SL}(3,\mathbb{C})$ exists, as one 
can show by direct calculation.  Hence, $\langle x_3, y_3, h_1+h_2\rangle \inplus \langle x_1, y_2 \rangle$ and $\langle x_3, y_3, h_1+h_2\rangle \inplus \langle x_2,  y_1\rangle$   are inequivalent subalgebras of $A_2$.

%%%%%%%%%%%%%%%%%%%%%%%%%%%%%%%%%%%%%%%%%%

We now consider  $3$-dimensional extensions of $A_1^1$ in $A_2$.  By Eq. \eqref{acc3}, 
a $3$-dimensional extension of $A^1_1$ with respect to the adjoint action of $A^1_1$ must be isomorphic to $V(1)\oplus V(0)$.  The highest weight vector
of $V(0)$ must be (a nonzero scalar multiple of) $h_1-h_2$, and the highest weight vector
of $V(1)$ must be a nonzero linear combination of $x_1$ and $x_2$.  As above, it is only a scalar multiple of $x_1$ or $x_2$ that can contribute to a representation of 
$V(1)\oplus V(0)$ which is also a $3$-dimensional subalgebra.

Hence, the possible $3$-dimensional extensions of $A_1^1$ are  $\langle h_1-h_2, x_1, y_2 \rangle$, and $\langle h_1-h_2, x_2, y_1 \rangle$, both of which are isomorphic to $L_{3,0}$. 
It follows that we have (at most) two $6$-dimensional Levi decomposable subalgebras  of $A_2$:
\begin{equation}\label{not2}
 \arraycolsep=1.4pt\def\arraystretch{1.5}
\begin{array}{lllll}
\langle x_3, y_3, h_1+h_2\rangle \inplus \langle h_1-h_2,x_1, y_2 \rangle,\\
\langle x_3, y_3, h_1+h_2\rangle \inplus \langle h_1-h_2,x_2,  y_1\rangle.
\end{array}
\end{equation}
These subalgebras are isomorphic.  The Chevalley involution of $A_2$ induces an isomorphism between $\langle x_3, y_3, h_1+h_2\rangle \inplus \langle h_1-h_2, x_1, y_2 \rangle$ and $\langle x_3, y_3, h_1-h_2\rangle \inplus \langle h_1-h_2, x_2,  y_1\rangle$.

Define
\begin{equation}
 \arraycolsep=1.4pt\def\arraystretch{1.5}
\begin{array}{ccccccccccccccccc}
\psi_2: &L_{2} &\rightarrow& L_{2} &\subseteq A_2, \\ 
& z_1 & \mapsto &  x_1 \\
  & z_2 & \mapsto &  y_2\\
   & z_3 & \mapsto & \frac{1}{3}(h_1-h_2),
\end{array}
\end{equation}
\begin{equation}
\begin{array}{rllll}
\varphi_2: &A_1 &\rightarrow& Der(L_{2})\\
\varphi_2(L): &z_i &\mapsto& \psi_2^{-1} ( [\chi(L), \psi_2(z_i)]), i=1,2,3,
\end{array}
\end{equation}
then
\begin{equation}
 \arraycolsep=1.4pt\def\arraystretch{1.5}
\begin{array}{lllll}
A_1 \inplus_{\varphi_2} L_{2} &\cong& \langle x_3, y_3, h_1+h_2\rangle \inplus \langle h_1-h_2,x_1, y_2 \rangle,\\
A_1 \inplus_{\varphi_2} L_{2}&\cong& \langle x_3, y_3, h_1+h_2\rangle \inplus \langle h_1-h_2,x_2,  y_1\rangle.
\end{array}
\end{equation}

We now show that  $\langle x_3, y_3, h_1+h_2\rangle \inplus \langle h_1-h_2, x_1, y_2 \rangle$ and $\langle x_3, y_3, h_1+h_2\rangle \inplus \langle h_1-h_2, x_2, y_1 \rangle$   are inequivalent subalgebras of $A_2$.  By way of contradiction, suppose that they are equivalent.  Then, there exists $A\in \mathrm{SL}(3,\mathbb{C})$ such that
$A^{-1} \langle x_3, y_3, h_1+h_2\rangle \inplus \langle h_1-h_2, x_1, y_2 \rangle A$ $=$ $\langle x_3, y_3, h_1+h_2\rangle \inplus \langle h_1-h_2, x_2,  y_1\rangle$.

%%%%%%%%%%%%%%%%%%%%%%%%%%%%%%%%%%%%%%%%%%

By Lemma \ref{lem1}, $A^{-1} \langle h_1-h_2, x_1, y_2 \rangle A$ $=$ $\langle h_1-h_2, x_2,  y_1\rangle$.  However, no such $A\in \mathrm{SL}(3,\mathbb{C})$ exists, as one 
can show by direct calculation.  Hence, $\langle x_3, y_3, h_1+h_2\rangle \inplus \langle h_1-h_2, x_1, y_2 \rangle$ and $\langle x_3, y_3, h_1+h_2\rangle \inplus \langle h_1-h_2, x_2,  y_1\rangle$   are inequivalent subalgebras of $A_2$.

%%%%%%%%%%%%%%%%%%%%%%%%%%%%%%%%%%%%%%%%%%
Finally we consider $4$-dimensional extensions of $A^1_1$ in $A_2$.  From Eq. \eqref{acc3}, such  an extension must be isomorphic to 
$V(1)\oplus V(1)$ as a representation with respect to the adjoint action of $A^{1}_1$.   However, as a subalgebra of $A_2$, such a representation will generate a subalgebra of 
dimension greater than $4$.  Thus there are no $4$-dimensional extensions of $A^1_1$ in $A_2$.
\end{proof}

If we make the following definitions:
\begin{equation}
 \arraycolsep=1.4pt\def\arraystretch{1.5}
\begin{array}{lllll}
(A_1 \oplus_{} J)^1 &\equiv& \langle x_3, y_3, h_1+h_2\rangle \oplus \langle h_1-h_2\rangle,\\
(A_1 \inplus_{\varphi_1} K_1)^1 &\equiv& \langle x_3, y_3, h_1+h_2\rangle \inplus \langle x_1, y_2 \rangle,\\
(A_1 \inplus_{\varphi_1} K_1)^2&\equiv& \langle x_3, y_3, h_1+h_2\rangle \inplus \langle x_2,  y_1\rangle,\\
(A_1 \inplus_{\varphi_2} L_{2})^1 &\equiv& \langle x_3, y_3, h_1+h_2\rangle \inplus \langle h_1-h_2, x_1, y_2 \rangle,\\
(A_1 \inplus_{\varphi_2} L_{2})^2&\equiv& \langle x_3, y_3, h_1+h_2\rangle \inplus \langle h_1-h_2, x_2,  y_1\rangle,
\end{array}
\end{equation}
we get the concise summary of Levi decomposable subalgebras of $A_2$ in Table \ref{enddd} at the end of the paper.  
\begin{remark}
Note that 
 $\mathfrak{gl}(2,\mathbb{C})\cong A_1 \oplus_{} J,$ where $\mathfrak{gl}(2,\mathbb{C})$ is the general linear
 algebra of $2\times 2$ complex matrices.  By Theorem \ref{les2}, there is a  unique subalgebra of $A_2$ isomorphic to $\mathfrak{gl}(2,\mathbb{C})$.
\end{remark}

\section{The solvable subalgebras of $\sll$}\label{solvvv}

\subsection{Introduction}

We shall proceed by cases to classify the solvable subalgebras of $\sll$ for each isomorphism class.  

\subsection{One-dimensional subalgebras of $\sll$}

\begin{theorem}\label{hgggg}
A classification of $1$-dimensional (solvable) subalgebras of $\sll$, up to inner automorphism, is given by: 
\begin{equation}
 \arraycolsep=1.4pt\def\arraystretch{1.2}
\begin{array}{lllllllll}
J^1&=& \langle x_1+x_2 \rangle, \\
 J^{2}&=& \langle x_1 \rangle, \\  J^{3}&=& \langle h_1+2h_2+x_1 \rangle,\\
  J^{4,\alpha}&=& \langle  h_1+ \alpha h_2 \rangle,
\end{array} 
\end{equation}
for $\alpha \in \mathbb{C}$, where  $J^{4,\alpha} \sim  J^{4,\beta}$ if and only if $\alpha = 1-\beta$,  $\alpha =\beta$, 
$\alpha =(-1+\alpha)\beta$,  $\alpha =-(-1+\alpha)(-1+\beta)$,  $\alpha \beta=1$, or $-\alpha (-1+\beta)=1$. 
Equivalently,  $J^{4,\alpha} \sim  J^{4,\beta}$ if and only if 
$\beta = \alpha,  \frac{1}{\alpha}, 1 - \alpha,  \frac{1}{1 - \alpha},
\frac {\alpha}{-1+\alpha}$,  or $ \frac {-1+\alpha}{\alpha}$.
\end{theorem}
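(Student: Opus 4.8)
The plan is to translate the problem into linear algebra. A $1$-dimensional subspace $\langle X\rangle\subseteq\sll$ is automatically a subalgebra, and $\langle X\rangle\sim\langle Y\rangle$ if and only if $Y=c\,AXA^{-1}$ for some $c\in\mathbb{C}^{\times}$ and $A\in SL(3,\mathbb{C})$, since the inner automorphisms of $\sll$ are exactly the maps $Z\mapsto AZA^{-1}$ with $A\in SL(3,\mathbb{C})$; as scalar matrices act trivially by conjugation, one may replace $SL(3,\mathbb{C})$ by $GL(3,\mathbb{C})$. Thus the theorem amounts to producing a list of normal forms for nonzero traceless $3\times 3$ complex matrices up to conjugacy and nonzero rescaling.

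The existence part I would obtain from the Jordan canonical form together with the trace-zero condition. The Jordan type (partition of $3$) is a conjugacy invariant, and rescaling multiplies all eigenvalues by a common nonzero constant without changing block sizes. There are four cases: (i) one $3\times 3$ block; tracelessness forces eigenvalue $0$, so this is the regular nilpotent, represented by $x_1+x_2$, giving $J^1$; (ii) blocks of sizes $2$ and $1$ with a common eigenvalue; again the eigenvalue is $0$, represented by $x_1$, giving $J^2$; (iii) blocks of sizes $2$ and $1$ with distinct eigenvalues $\mu$ (size $2$) and $\nu$ (size $1$), where $2\mu+\nu=0$ forces $\nu=-2\mu$ and $\mu\neq 0$, and rescaling to $\mu=1$ gives the matrix $h_1+2h_2+x_1$, i.e. $J^3$; (iv) diagonalizable, with eigenvalues $(\lambda_1,\lambda_2,\lambda_3)$ summing to $0$ and not all zero; after a permutation we may assume $\lambda_1\neq 0$, rescale to $\lambda_1=1$, and set $\alpha=-\lambda_3$, so the eigenvalues are $(1,\alpha-1,-\alpha)$ and the matrix is $h_1+\alpha h_2$, i.e. $J^{4,\alpha}$. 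Hence every $1$-dimensional subalgebra is equivalent to one in the list.

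For the inequivalences: the invariants ``Jordan block sizes'' and ``semisimple / nilpotent / neither'' separate $J^1$ (regular nilpotent), $J^2$ (nonregular nilpotent), $J^3$ (neither), and every $J^{4,\alpha}$ (semisimple, nonzero since $1$ is an eigenvalue), so these four families are pairwise disjoint. It remains to decide when $J^{4,\alpha}\sim J^{4,\beta}$. Since conjugation preserves the unordered spectrum and the only remaining freedom is the scalar $c$, this holds if and only if the multisets $\{1,\alpha-1,-\alpha\}$ and $c\{1,\beta-1,-\beta\}$ coincide for some $c\in\mathbb{C}^{\times}$; equivalently, if and only if $(1,\beta-1,-\beta)$ is obtained from $(1,\alpha-1,-\alpha)$ by a permutation in $S_3$ followed by the rescaling that restores the first coordinate to $1$. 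Computing the six permutations — swapping the first two coordinates sends $\alpha\mapsto\alpha/(\alpha-1)$, swapping the last two sends $\alpha\mapsto 1-\alpha$, swapping the outer two sends $\alpha\mapsto 1/\alpha$, and the two $3$-cycles send $\alpha$ to $1/(1-\alpha)$ and to $(\alpha-1)/\alpha$ — produces precisely the six M\"obius transformations in the statement (the orbit of $\alpha$ under the anharmonic cross-ratio action of $S_3$), and conversely each such $\beta$ is realized by a genuine permutation, so the list of equivalences is complete; a short check shows the two displayed forms of the condition agree.

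I expect the bookkeeping in the last step to be the only real obstacle: one must match each element of $S_3$ with the correct M\"obius map and, for the ``only if'' direction, verify that an equivalence $J^{4,\alpha}\sim J^{4,\beta}$ genuinely forces the two eigenvalue multisets to be proportional rather than merely equal as sets — which they are, because the matching must be a bijection of multisets and $c$ is the only extra degree of freedom. I would also remark that for special $\alpha$ (for instance $\alpha\in\{-1,1/2,2\}$, or $\alpha$ a primitive sixth root of unity) the six listed values collapse to fewer than six, but the statement as phrased — enumerating the admissible $\beta$, allowing repetitions — remains valid.
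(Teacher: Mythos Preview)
Your proof is correct and follows essentially the same approach as the paper: reduce to Jordan normal form plus nonzero rescaling, separate the four Jordan types, and for the diagonal family compare the eigenvalue multisets $\{1,\alpha-1,-\alpha\}$ and $\lambda\{1,\beta-1,-\beta\}$. Your organization of the final step via the $S_3$ cross-ratio action is a bit more conceptual than the paper's case-by-case enumeration (the paper splits on which entry of $\{1,\beta-1,-\beta\}$ equals $1/\lambda$), but the underlying computation is identical.
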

\begin{proof}
Every element of $A_2$ is equivalent to a traceless matrix in Jordan Normal Form, via conjugation by an element of $\mathrm{GL}(3,\mathbb{C})$, and hence by an 
element of $\mathrm{SL}(3,\mathbb{C})$. Thus, every element of $A_2$ is equivalent to a matrix in one of the following forms:
\begin{equation}\label{segg}
\begin{array}{llllllllll}
\left(
\begin{array}{ccc}
0 & 1 & 0   \\
 0 & 0 & 1   \\
 0& 0 & 0
\end{array}
\right), & \left(
\begin{array}{ccc}
\alpha & 1 & 0   \\
 0 & \alpha & 0   \\
 0& 0 &-2\alpha  
\end{array}
\right), & \text{or} \left(
\begin{array}{ccc}
\alpha & 0 & 0   \\
 0 & \beta & 0   \\
 0& 0 &-\alpha-\beta  
\end{array}
\right),
\end{array}
\end{equation}
for some $\alpha, \beta \in \mathbb{C}$.  Because multiplying by a nonzero scalar does not affect the block type of the 
Jordan Normal Form,  a $1$-dimensional subalgebra  
generated by one of the above matrices is inequivalent to any algebra generated by 
a matrix of either of the other two types.  

Corresponding to the first matrix in Eq. \eqref{segg}, define the subalgebra
\begin{equation}
J^1 \equiv  \langle x_1+ x_2\rangle.
\end{equation}
Any subalgebra generated by a matrix whose Jordan Canonical Form is the 
first matrix in Eq. \eqref{segg} is equivalent to $J^{1}$.

The subalgebra generated by a matrix whose Jordan Canonical Form is the second matrix in Eq. \eqref{segg} is equivalent to the subalgebra generated by 
\begin{equation}\label{bgn}
\begin{array}{llllllllll}
\left(
\begin{array}{ccc}
0 & 1 & 0   \\
 0 & 0 & 0   \\
 0& 0 & 0
\end{array}
\right), & \text{or} \left(
\begin{array}{ccc}
1 & \frac{1}{\alpha} & 0   \\
 0 & 1 & 0   \\
 0& 0 & -2
\end{array}
\right),
\end{array}
\end{equation}
for $\alpha \in \mathbb{C}^*$.  The second matrix in Eq. \eqref{bgn} is conjugate in $\mathrm{SL}(3,\mathbb{C})$ to 
\begin{equation}
\begin{array}{llllllllll}
\left(
\begin{array}{ccc}
1 & 1& 0   \\
 0 & 1 & 0   \\
 0& 0 & -2
\end{array}
\right).
\end{array}
\end{equation}
Hence, we have the following inequivalent $1$-dimensional subalgbras
\begin{equation}
\begin{array}{lll}
J^2\equiv \langle x_1 \rangle, & J^3\equiv \langle h_1+2h_2+x_1 \rangle.
\end{array}
\end{equation}
Finally, we consider  $1$-dimensional subalgebras generated by the third matrix in Eq. \eqref{segg}, which is diagonal. The $1$-dimensional subalgebra generated by this matrix is equivalent to the subalgebra generated by
\begin{equation}
\begin{array}{llllllllll}
\left(
\begin{array}{ccc}
1 & 0& 0   \\
 0 & -1+\alpha & 0   \\
 0& 0 & -\alpha
\end{array}
\right)=h_1+\alpha h_2,
\end{array}
\end{equation}
for some $\alpha \in \mathbb{C}$ (not necessarily the same $\alpha$ as in Eq. \eqref{segg}). Define
\begin{equation}
\begin{array}{lll}
J^{4, \alpha} \equiv \langle h_1+ \alpha h_2\rangle. 
\end{array}
\end{equation}

Suppose $J^{4, \alpha} \sim J^{4, \beta}$. Then $h_1+\alpha h_2$ is conjugate by an element of $\mathrm{SL}(3,\mathbb{C})$ to 
$\lambda(h_1+\beta h_2)$, for some $\lambda \in \mathbb{C}^*$.  This implies that  the diagonal matrix $h_1+\alpha h_2$ must be equal to the diagonal matrix $\lambda(h_1+\beta h_2)$ after permutation of diagonal entries.  That is, $\{ 1, -1+\alpha, -\alpha\} =\{\lambda, \lambda(-1+\beta),-\lambda \beta \}$.  We now proceed in cases according to the 
value of $\lambda$.

\vspace{2mm}

\noindent \underline{Case $1$}.  $\lambda=1$.  Then $-\alpha = -1+\beta$, or $-\alpha =-\beta$.  That is, 
$\alpha = 1-\beta$, or $\alpha =\beta$.

\vspace{2mm}

\noindent \underline{Case $2$}.  $\lambda= -1+\alpha$.  Then $-\alpha =-(-1+\alpha) \beta$, or $-\alpha=(-1+\alpha) (-1+\beta)$.  That is,
$\alpha =(-1+\alpha)\beta$, or $\alpha =-(-1+\alpha)(-1+\beta)$.  
 
\vspace{2mm}

\noindent \underline{Case $3$}.  $\lambda=-\alpha$.  Then $1=\alpha \beta$, or $-1+\alpha = \alpha \beta$.  That is,
$\alpha \beta=1$, or $-\alpha (-1+\beta)=1$.

\vspace{2mm}

We have shown that if $J^{4, \alpha} \sim J^{4, \beta}$, then $\alpha = 1-\beta$,  $\alpha =\beta$, 
$\alpha =(-1+\alpha)\beta$,  $\alpha =-(-1+\alpha)(-1+\beta)$,  $\alpha \beta=1$, or $-\alpha (-1+\beta)=1$.  Further, if one of these conditions holds then $J^{4, \alpha} \sim J^{4, \beta}$.
\end{proof}

\subsection{Two-dimensional subalgebras of $\sll$}

We consider each of the two $2$-dimensional (solvable) Lie algebras as subalgebras of $\sll$ separately in the theorems below.

\begin{theorem}\label{kone}
A classification of the subalgebras of $\sll$ isomorphic to $K_1$, up to inner automorphism, is given by:
\begin{equation}
 \arraycolsep=1.4pt\def\arraystretch{1.2}
\begin{array}{llll}
\vspace{1mm}
K^1_1 &=& \langle x_1+x_2,x_3 \rangle, \\
\vspace{1mm}
K^2_1 &=& \langle x_1, h_1+2h_2 \rangle, \\
\vspace{1mm}
K^{3}_1 &=& \langle x_1, x_3  \rangle,\\
\vspace{1mm}
K^{4}_1 &=& \langle x_1, y_2  \rangle,\\
\vspace{1mm}
K^5_1 &=& \langle h_1, h_2 \rangle.
\end{array}
\end{equation}
\end{theorem}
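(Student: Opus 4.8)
The plan is to classify the $2$-dimensional abelian subalgebras $\mathfrak{a}\subseteq\sll$ up to $SL(3,\mathbb{C})$-conjugacy by sorting on the Jordan type of the matrices they contain, using repeatedly that $\mathfrak{a}$ lies in the centralizer $C_{\sll}(Z)$ of each of its elements $Z$, together with $C_{\sll}(Z)=C_{\sll}(Z_s)\cap C_{\sll}(Z_n)$ (valid since the semisimple and nilpotent parts $Z_s,Z_n$ are polynomials in $Z$). Because commuting nilpotent matrices sum to a nilpotent matrix, the nilpotent elements of $\mathfrak{a}$ form a subspace, so either $\mathfrak{a}$ contains a matrix with nonzero semisimple part, or every element of $\mathfrak{a}$ is nilpotent; I would treat these two cases in turn.

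Suppose first that some $Z\in\mathfrak{a}$ has $Z_s\ne0$. If $Z_s$ has three distinct eigenvalues, conjugate it into the diagonal Cartan $\mathfrak{h}$; then $C_{\sll}(Z_s)=\mathfrak{h}$ is already $2$-dimensional, so $\mathfrak{a}=\mathfrak{h}$, which up to conjugacy is $K^5_1=\langle h_1,h_2\rangle$. Otherwise, after conjugating and rescaling, $Z_s=h_1+2h_2$, whose centralizer is the block subalgebra isomorphic to $\mathfrak{gl}(2,\mathbb{C})\cong A_1\oplus J$ with centre $\langle Z_s\rangle$. If $Z_n\ne0$, conjugate by the copy of $SL(2,\mathbb{C})$ fixing $Z_s$ so that $Z_n=x_1$; a direct computation then gives $C_{\sll}(Z_s+Z_n)=\langle h_1+2h_2,\,x_1\rangle$, which is $2$-dimensional, whence $\mathfrak{a}=\langle h_1+2h_2,x_1\rangle\cong K^2_1$. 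If $Z_n=0$, then $\mathfrak{a}$ is a $2$-dimensional abelian subalgebra of $A_1\oplus J$ containing the centre, and its other generator projects to an element of $A_1\cong\mathfrak{sl}(2,\mathbb{C})$ which after conjugation is either nilpotent, again giving $\mathfrak{a}\cong K^2_1$, or semisimple, in which case $\mathfrak{a}$ lies in a Cartan and we are back to $K^5_1$.

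Now suppose every element of $\mathfrak{a}$ is nilpotent. Commuting nilpotent matrices can be simultaneously strictly upper‑triangularised, so $\mathfrak{a}$ may be conjugated into the Heisenberg subalgebra $\mathfrak{n}=\langle x_1,x_2,x_3\rangle$, whose centre is $\langle x_3\rangle=[\mathfrak{n},\mathfrak{n}]$. If $x_3\notin\mathfrak{a}$, then $\mathfrak{a}$ maps isomorphically onto $\mathfrak{n}/\langle x_3\rangle\cong\mathbb{C}^2$, on which the commutator induces a nondegenerate pairing — contradicting that $\mathfrak{a}$ is abelian. Hence $x_3\in\mathfrak{a}$ and $\mathfrak{a}=\langle x_3,\ \alpha x_1+\beta x_2\rangle$. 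If $\alpha\beta\ne0$, then $N:=\alpha x_1+\beta x_2$ is a regular nilpotent and $\mathfrak{a}=C_{\sll}(N)=\langle N,N^2\rangle$; conjugating $N$ to $x_1+x_2$ yields $\mathfrak{a}\cong K^1_1=\langle x_1+x_2,x_3\rangle$. If $\alpha\beta=0$, every element of $\mathfrak{a}$ has rank at most one: when $\beta=0$ we get $\langle x_1,x_3\rangle=K^3_1$, and when $\alpha=0$ we get $\langle x_2,x_3\rangle$, which is $SL(3,\mathbb{C})$-conjugate to $K^4_1=\langle x_1,y_2\rangle$ via a coordinate‑permutation matrix rescaled into $SL(3,\mathbb{C})$. (Invariantly, this rank‑one situation splits according to whether the images of all elements of $\mathfrak{a}$ coincide — giving $K^3_1$ — or not — giving $K^4_1$.)

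Finally I would check the five subalgebras are pairwise inequivalent using $GL(3,\mathbb{C})$-invariants: only $K^5_1$ contains a regular semisimple matrix; among the remaining four only $K^2_1$ contains a non‑nilpotent matrix; among the three all‑nilpotent ones only $K^1_1$ contains a matrix of rank two; and $K^3_1$, $K^4_1$ — isomorphic as abstract Lie algebras, and related by the outer automorphism $X\mapsto-X^{\top}$ of $\sll$ — are distinguished by $\dim\!\big(\textstyle\sum_{M\in\mathfrak{a}}\operatorname{im}M\big)$, which equals $1$ for $K^3_1$ and $2$ for $K^4_1$. I expect the main obstacle to be the all‑nilpotent case: making the reduction into $\mathfrak{n}$ precise, and — more subtly — recognising that $K^3_1$ and $K^4_1$ are genuinely inequivalent under inner automorphism even though they are isomorphic and related by an outer automorphism, so that the rank‑one situation really does yield two classes rather than one; the centralizer computations in the semisimple case are routine once $Z_s$ is put in normal form.
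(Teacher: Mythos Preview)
Your proposal is correct and takes a genuinely different route from the paper's proof.

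The paper's argument leans on the preceding classification of one-dimensional subalgebras (Theorem~\ref{hgggg}): it fixes a generator $z_1$ in one of the four canonical forms $x_1+x_2$, $x_1$, $h_1+2h_2+x_1$, $h_1+\alpha h_2$, then solves $[z_1,z_2]=0$ explicitly in each case and reduces further by writing down explicit conjugating matrices in $SL(3,\mathbb{C})$, with several nested subcases. By contrast, you organise the argument around the Jordan decomposition $Z=Z_s+Z_n$ and the identity $C_{\sll}(Z)=C_{\sll}(Z_s)\cap C_{\sll}(Z_n)$, splitting first on whether $\mathfrak{a}$ contains any element with nonzero semisimple part; in the all-nilpotent case you invoke simultaneous strict triangularisation into the Heisenberg $\mathfrak{n}$ and use the nondegenerate skew form on $\mathfrak{n}/[\mathfrak{n},\mathfrak{n}]$ to force $x_3\in\mathfrak{a}$. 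Your inequivalence argument via $GL(3,\mathbb{C})$-invariants (presence of a regular semisimple element; presence of a non-nilpotent element; maximal rank; dimension of the joint image) is also more conceptual than the paper's appeal to ``direct linear algebraic methods''. What your approach buys is brevity and a clear structural explanation of why exactly five classes appear; what the paper's approach buys is self-containment relative to Theorem~\ref{hgggg}, and an entirely explicit list of conjugating matrices, which fits the computational style used throughout the rest of the classification.
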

\begin{proof}
Let  $K$ be a subalgebra  which is isomorphic to $K_1$ with basis $z_1, z_2$ so that $[z_1, z_2]=0$. From Theorem \ref{hgggg}, we have the following cases.

\vspace{2mm}

 \noindent \underline{Case 1}. $\langle z_1 \rangle$ is conjugate in $\mathrm{SL}(3, \mathbb{C})$ to $\langle x_1+x_2 \rangle$.  After conjugation, we may assume $z_1=x_1+x_2$.   The commutation relation $[z_1,z_2]=0$ implies  $z_2=a(x_1+x_2)+bx_3$, for $a, b \in \mathbb{C}$.  Hence, $K$ is equivalent to $\langle x_1+x_2, x_3 \rangle$.  Define
 \begin{equation}
 K^1_1 \equiv \langle x_1+x_2, x_3 \rangle.
 \end{equation}

\vspace{2mm}

 \noindent \underline{Case 2}. $\langle z_1 \rangle$ is conjugate in $\mathrm{SL}(3,\mathbb{C})$ to $\langle x_1\rangle$.   After conjugation by an element of $\mathrm{SL}(3,\mathbb{C})$, we may assume $z_1=x_1$.   The commutation relation $[z_1,z_2]=0$ implies $z_2=\alpha(h_1+2h_2)+\beta x_1+\gamma x_3+\delta y_2$, for $\alpha, \beta, \gamma, \delta \in \mathbb{C}$.  This implies $K$ is equivalent to $\langle x_1, \alpha (h_1+2h_2)+\gamma x_3+\delta y_2\rangle$.

\vspace{2mm}

 \noindent \underline{Case 2.1}. $\alpha \neq 0$.  Then $z_1=x_1$, and $z_2=\alpha(h_1+2h_2)+\gamma x_3+\delta y_2$. Conjugation by
  \begin{equation}
  \left(
\begin{array}{ccc}
1 & 0 & \frac{\gamma}{3 \alpha}   \\
 0 & 1 & 0   \\
 0& \frac{ \delta}{3 \alpha } &1  
\end{array}
\right) \in \mathrm{SL}(3, \mathbb{C}),
\end{equation}
 fixes $z_1=x_1$ and 
 \begin{equation}
 A^{-1} z_2 A = \left(
\begin{array}{ccc}
\alpha & -\frac{\gamma \delta}{3\alpha} & 0  \\
 0 & \alpha & 0   \\
 0& 0 & -2\alpha  
\end{array}
\right).
 \end{equation}
Hence, $K$ is equivalent to $\langle x_1, h_1+2 h_2\rangle$.  Define
 \begin{equation}
 K^2_1 \equiv \langle x_1, h_1+2h_2 \rangle.
 \end{equation}
 
 \vspace{2mm}
 
   \noindent \underline{Case 2.2}. $\alpha =  0$, $\gamma \neq 0$, $\delta \neq 0$.   Then $z_1=x_1$, and $z_2=\gamma x_3+\delta y_2$. It follows that
  $K = \langle x_1, x_3+ \frac{\delta}{\gamma}y_2 \rangle$.  
  If 
    \begin{equation}
  \begin{array}{lllllll}
A=\left(
\begin{array}{ccc}
\lambda & 0 & 0   \\
 0 & \frac{1}{\lambda^2} & 0   \\
 0& 0 & \lambda  
\end{array}
\right)\in \mathrm{SL}(3, \mathbb{C}), ~\text{with}~\lambda^3 =\frac{\delta}{\gamma}, 
\end{array}
  \end{equation}
then   
$A^{-1} \langle x_1, x_3+ \frac{\delta}{\gamma} y_2\rangle A=  \langle x_1, x_3+  y_2\rangle$.
  Further,  $B^{-1} \langle x_1, x_3+  y_2\rangle  
  B
  = K^1_1$, where 
      \begin{equation}
  \begin{array}{lllllll}
  B
 =\left(
\begin{array}{ccc}
-1& 1 & 0   \\
 0 & 0 & 1   \\
 0& 1 & 0 
\end{array}
\right)\in \mathrm{SL}(3, \mathbb{C}).
\end{array}
  \end{equation}

\vspace{2mm}

    \noindent \underline{Case 2.3}. $\alpha =  0$, $\gamma \neq 0$, $\delta = 0$.   Then $z_1=x_1$, and $z_2=\gamma x_3$, and $K$ is equivalent to $\langle x_1, x_3 \rangle$. Define
     \begin{equation}
 K^3_1 \equiv \langle x_1, x_3 \rangle.
 \end{equation}

\vspace{2mm}
 
    \noindent \underline{Case 2.4}. $\alpha =  0$, $\gamma = 0$, $\delta \neq 0$.   Then $z_1=x_1$, and $z_2=\delta y_2$, and
    $K$ is equivalent to $\langle x_1, y_2 \rangle$.
    Define
     \begin{equation}
 K^4_1 \equiv \langle x_1, y_2 \rangle.
 \end{equation}
 
  \vspace{2mm}

 \noindent \underline{Case 3}. $\langle z_1 \rangle$ is conjugate in $\mathrm{SL}(3, \mathbb{C})$ to $\langle h_1+2h_2+x_1\rangle$, so we may assume $z_1=h_1+2h_2+x_1$.   The commutation relation $[z_1,z_2]=0$ implies $z_2=a(h_1 + 2h_2)+bx_1$, for $a, b \in \mathbb{C}$.  This implies that $K$ is equivalent to
 $\langle x_1, h_1+2h_2 \rangle=K^2_1$.
 
 \vspace{2mm}
 
   \noindent \underline{Case 4}. $\langle z_1 \rangle$ is conjugate in $\mathrm{SL}(3,\mathbb{C})$ to $\langle h_1+ \alpha h_2\rangle$, so we may assume $z_1=h_1+\alpha h_2$. 
 
 \vspace{2mm}
 
  \noindent \underline{Case 4.1}. $\alpha=2$.    The commutation relation  $[z_1,z_2]=0$ implies that 
   \begin{equation}
    \arraycolsep=1.4pt\def\arraystretch{1.3}
  \begin{array}{lllllll}
  z_2=\left(
\begin{array}{ccc}
\beta & \delta & 0   \\
 \epsilon & -\beta+\gamma & 0   \\
 0& 0 & -\gamma
\end{array}
\right), 
\end{array}
  \end{equation}
   for $\beta, \gamma, \delta, \epsilon \in \mathbb{C}$.  Let $A \in \mathrm{SL}(3,\mathbb{C})$ 
  such that $A^{-1} z_1A=z_1$, then
  \begin{equation}\label{Eq:matrix4.1}
  \begin{array}{lllllll}
A=\left(
\begin{array}{ccc}
a & b & 0   \\
 c & d & 0   \\
 0& 0 & \frac{1}{ad-bc}  
\end{array}
\right),
\end{array}
  \end{equation}
  for $a, b, c, d \in \mathbb{C}$ such that $ad-bc \neq 0$.   If $\begin{pmatrix} a & b \\ c&d\\ \end{pmatrix} \in \mathrm{GL}(2, \mathbb{C})$ is chosen so 
  that it conjugates the upper left $2\times 2$ block of
  $z_2$ to its Jordan Normal Form,  then $A$ in Equation (\ref{Eq:matrix4.1}) fixes $z_{1}$ and
  conjugates $z_{2}$ to 
    \begin{equation}
  \begin{array}{lllllll}
  \left(
\begin{array}{ccc}
\lambda & 1  & 0 \\
 0 & \lambda & 0 \\
 0&0& -2\lambda 
\end{array}
\right) &  \text{or} ~ \left(
\begin{array}{ccc}
\lambda & 0 & 0 \\
 0 & \mu &0 \\
 0&0& -\lambda -\mu
\end{array}
\right).
\end{array}
  \end{equation}
 In the former case, $K$ is equivalent to  $\langle x_1, h_1+2h_2\rangle=K^2_1$.    In the latter case, $K$ is equivalent to  $\langle h_1, h_2 \rangle$.  Define
   \begin{equation}
 K^5_1 \equiv \langle h_1, h_2 \rangle.
 \end{equation}
  
\vspace{2mm}

  \noindent \underline{Case 4.2}.  $\alpha=-1$. The commutation relation  $[z_1,z_2]=0$ implies that 
  \begin{equation}
  \begin{array}{lllllll}
  z_2=\left(
\begin{array}{ccc}
\beta & 0 & -\delta   \\
 0 & -\beta+\gamma & 0   \\
 -\epsilon & 0 & -\gamma
\end{array}
\right), 
\end{array}
  \end{equation}
   for $\beta, \gamma, \delta, \epsilon \in \mathbb{C}$.  Let $A \in \mathrm{SL}(2,\mathbb{C})$ 
  such that $A^{-1} z_1A=z_1$, then
  \begin{equation}\label{Eq:matrix4.2}
  \begin{array}{lllllll}
A=\left(
\begin{array}{ccc}
a & 0 & b   \\
 0 & \frac{1}{ad-bc} & 0   \\
 c& 0 & d  
\end{array}
\right),
\end{array}
  \end{equation}
  for $a, b, c, d \in \mathbb{C}$ such that $ad-bc \neq 0$.    If $\begin{pmatrix} a & b \\ c&d\\ \end{pmatrix} \in \mathrm{GL}(2, \mathbb{C})$ is chosen so 
  that it conjugates the  $2\times 2$ matrix consisting of the outer corners of 
  $z_2$ to its Jordan Normal Form,  then $A$ in Equation (\ref{Eq:matrix4.2}) 
  fixes $z_{1}$ and 
  conjugates $z_{2}$ to 
    \begin{equation}
  \begin{array}{lllllll}
  \left(
\begin{array}{ccc}
\lambda & 0 &1   \\
 0 & -2 \lambda & 0\\
0 & 0 &\lambda  
\end{array}
\right) &   \text{or} ~ \left(
\begin{array}{ccc}
\lambda & 0 &0   \\
 0 & - \lambda-\mu & 0\\
0 & 0 &\mu  
\end{array}
\right).
\end{array}
  \end{equation}
  In the latter case, $K$ is equivalent to $\langle h_1, h_2 \rangle=K^5_1$.
 In the former case, $K$ is equivalent to  $\langle x_3, h_1-h_2\rangle$, which is equivalent to $K^2_1$ via conjugation by
   \begin{equation}
  \begin{array}{lllllll}
\left(
\begin{array}{ccc}
1  & 0 & 0   \\
 0 & 0& 1   \\
 0& -1 & 0  
\end{array}
\right) \in \mathrm{SL}(3, \mathbb{C}).
\end{array}
  \end{equation}
 
 \vspace{2mm}

  \noindent \underline{Case 4.3}.  $\alpha=\frac{1}{2}$.  The commutation relation  $[z_1,z_2]=0$ implies that 
  \begin{equation}
  \begin{array}{lllllll}
  z_2=\left(
\begin{array}{ccc}
-\gamma & 0 & 0   \\
 0 & \beta & \delta   \\
 0 & \epsilon & -\beta+\gamma
\end{array}
\right), 
\end{array}
  \end{equation}
   for $\beta, \gamma, \delta, \epsilon \in \mathbb{C}$.  Let $A \in \mathrm{SL}(2,\mathbb{C})$ 
  such that $A^{-1} z_1A=z_1$, then
 \begin{equation}
  \begin{array}{lllllll}
A=\left(
\begin{array}{ccc}
\frac{1}{ad-bc}  & 0 & 0   \\
 0 & a& b   \\
 0& c & d  
\end{array}
\right),
\end{array}
  \end{equation}
  for $a, b, c, d \in \mathbb{C}$ such that $ad-bc \neq 0$.  For appropriate choices of $a, b, c, d$ the matrix $A$ fixes $z_1$ and conjugates $z_2$ to
    \begin{equation}
  \begin{array}{lllllll}
  \left(
\begin{array}{ccc}
-2\lambda & 0 &0   \\
 0 &  \lambda & 1\\
0 & 0 &\lambda  
\end{array}
\right) &   \text{or}~ \left(
\begin{array}{ccc}
-\lambda-\mu & 0 &0   \\
 0 & \lambda & 0\\
0 & 0 &\mu  
\end{array}
\right).
\end{array}
  \end{equation}
  In the latter case, $K$ is equivalent to $\langle h_1, h_2 \rangle=K^5_1$.
  In the former case, $K$ is equivalent to $\langle x_2, h_1+\frac{1}{2}h_2\rangle$.    Then $B^{-1} \langle x_2, h_1+\frac{1}{2}h_2\rangle B$ $=$
 $\langle x_1, h_1+2h_2\rangle=K^2_1$, where
  \begin{equation}
  \begin{array}{lllllll}
B=\left(
\begin{array}{ccc}
0  & 0 & 1   \\
 1 & 0& 0   \\
 0& 1 & 0  
\end{array}
\right) \in \mathrm{SL}(3, \mathbb{C}).
\end{array}
  \end{equation}
 
\vspace{2mm}

  \noindent \underline{Case 4.4}.  $\alpha \neq 2, \frac{1}{2}, -1$.  The commutation relation  $[z_1,z_2]=0$ implies that 
  $z_2$ is diagonal.  Hence $K$ is equivalent to $\langle h_1, h_2\rangle=K^5_1$.
 
\vspace{2mm}

The above cases establish that each subalgebra of $A_2$ isomorphic to $K_1$ is equivalent to $K^1_1, K^2_1, K^3_1, K^4_1$, or $K^5_1$. Direct linear algebraic methods establish that these five subalgebras are pairwise inequivalent.  For instance, $K^5_1$ is not equivalent to any of the other subalgebras since an equivalence would imply $x_1$ or $x_1+x_2$ is diagonalizable, which of course is not true.  
\end{proof}

\begin{theorem}\label{ktwo}
A classification of the  subalgebras of $\sll$ isomorphic to $K_2$, up to inner automorphism, is given by:
\begin{equation}
 \arraycolsep=1.4pt\def\arraystretch{1.2}
\begin{array}{llll}
\vspace{1mm}
K_2^1 &=& \langle x_1+x_2, h_1+ h_2\rangle,\\
\vspace{1mm}
K_2^{2}&=& \langle x_1, -\frac{1}{3}h_2+\frac{1}{3}h_2+x_3 \rangle, \\
\vspace{1mm}
K_2^{3} &=& \langle x_1, -\frac{2}{3}h_1-\frac{1}{3}h_2+y_2\rangle,\\
K_2^{4,\alpha}&=& \langle x_1, \alpha h_1+(2\alpha +1)h_2\rangle,
\end{array}
\end{equation}
where $K_2^{4,\alpha} \sim K_2^{4,\beta}$ if and only if $\alpha=\beta$.
\end{theorem}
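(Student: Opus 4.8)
The plan is to follow the method of Theorem~\ref{kone}. Let $K$ be a subalgebra of $A_2$ isomorphic to $K_2$, with basis $z_1,z_2$ satisfying $[z_1,z_2]=z_1$. Since $K$ is solvable it may be conjugated into upper triangular form (Lie's theorem), so $z_1\in[K,K]$ is a nonzero nilpotent matrix; by Theorem~\ref{hgggg} the only $1$-dimensional subalgebras of $A_2$ with nilpotent generator are $J^1=\langle x_1+x_2\rangle$ and $J^2=\langle x_1\rangle$, so after an inner automorphism and a rescaling of $z_1$ (which changes nothing, since $[\lambda z_1,z_2]=\lambda z_1$) we may assume $z_1=x_1+x_2$ or $z_1=x_1$. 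In each case I will solve the linear system $[z_1,z_2]=z_1$ for $z_2$, reduce $z_2$ modulo $\langle z_1\rangle$ (which does not change $K$), and then conjugate by the stabilizer of the line $\langle z_1\rangle$ in $SL(3,\mathbb{C})$ to reach a normal form; finally I will exhibit invariants separating the resulting subalgebras.

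For $z_1=x_1+x_2$, a regular nilpotent, the centralizer of $x_1+x_2$ in $A_2$ is $\langle x_1+x_2,x_3\rangle$ (spanned by $x_1+x_2$ and its square), and a multiple of $h_1+h_2$ is a particular solution of $[x_1+x_2,z_2]=x_1+x_2$; hence modulo $\langle z_1\rangle$ the subalgebra $K$ has the form $\langle x_1+x_2,\ h_1+h_2+tx_3\rangle$ for some $t\in\mathbb{C}$. Conjugating by the unipotent matrix $I-\frac{t}{2}E_{13}\in SL(3,\mathbb{C})$, which commutes with $x_1+x_2$, sends $t$ to $0$, so this case yields the single subalgebra $K_2^1=\langle x_1+x_2,h_1+h_2\rangle$, which is visibly isomorphic to $K_2$.

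For $z_1=x_1=E_{12}$, a minimal nilpotent, solving $[E_{12},z_2]=E_{12}$ and reducing modulo $\langle E_{12}\rangle$ shows that $z_2$ may be taken to be $\mathrm{diag}(\beta,\beta+1,-2\beta-1)+\gamma E_{13}+\delta E_{32}$ for some $\beta,\gamma,\delta\in\mathbb{C}$ (note $E_{13}=-x_3$, $E_{32}=y_2$, $\mathrm{diag}(\beta,\beta+1,-2\beta-1)=\beta h_1+(2\beta+1)h_2$, and the eigenvalues of $z_2$ are $\beta,\beta+1,-2\beta-1$). The stabilizer of $\langle E_{12}\rangle$ in $SL(3,\mathbb{C})$ is the group of matrices whose $(2,1)$, $(2,3)$ and $(3,1)$ entries vanish, and conjugation by such a matrix carries $E_{12}$ to a scalar multiple of itself. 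A short check shows $z_2$ is diagonalizable except when $\beta=-\frac13$ with $\gamma\ne0$ or $\beta=-\frac23$ with $\delta\ne0$. When $z_2$ is diagonalizable I will choose the conjugating matrix to have eigenvectors of $z_2$ for its columns (the constraints above are compatible with this), bringing $z_2$ to $\mathrm{diag}(\beta,\beta+1,-2\beta-1)$ and giving $K_2^{4,\beta}$; when $\beta=-\frac13,\ \gamma\ne0$, using the generalized eigenspace decomposition I will first eliminate $\delta$ and then rescale $\gamma$, obtaining $K_2^2$; symmetrically, when $\beta=-\frac23,\ \delta\ne0$, I will eliminate $\gamma$ and rescale $\delta$, obtaining $K_2^3$. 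Thus every subalgebra isomorphic to $K_2$ is equivalent to one of $K_2^1,K_2^2,K_2^3,K_2^{4,\alpha}$.

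It remains to show these are pairwise inequivalent and that $K_2^{4,\alpha}\sim K_2^{4,\beta}$ forces $\alpha=\beta$. A subalgebra $K$ determines, up to inner automorphism, the line $[K,K]=\langle z_1\rangle$, hence the subspace $\mathrm{im}(z_1)\subseteq\mathbb{C}^3$, and the coset $\Sigma=\{w\in K:[w,z_1]=-z_1\}$ (which is independent of the chosen generator of $\langle z_1\rangle$). In the minimal-nilpotent cases $z_1^2=0$, so $z_1$ vanishes on $\mathrm{im}(z_1)$ and every $w\in\Sigma$ acts on $\mathrm{im}(z_1)$ by one and the same scalar $\mu$; both $\mu$ and the diagonalizability of the matrices in $\Sigma$ are inner-automorphism invariants of $K$. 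Now $K_2^1$ is separated from the others because its $z_1$ has rank $2$ while theirs has rank $1$; and one computes $\mu=\alpha$ with $\Sigma$ diagonalizable for $K_2^{4,\alpha}$, $\mu=-\frac13$ with $\Sigma$ non-diagonalizable for $K_2^2$, and $\mu=-\frac23$ with $\Sigma$ non-diagonalizable for $K_2^3$, so comparing invariants yields every inequivalence and, in particular, $\alpha=\beta$. The main work is the bookkeeping in the minimal-nilpotent case: checking that every diagonalizable $z_2$, including those with $\gamma\ne0$ and $\delta\ne0$ simultaneously, reduces to the diagonal normal form, while the families $\beta=-\frac13$ and $\beta=-\frac23$ survive as genuinely distinct subalgebras. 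The subtle point is that $K_2^2$ and $K_2^3$ have $z_2$'s of the same Jordan type, so they are told apart not by the conjugacy class of $z_2$ alone but by the finer invariant $\mu$, the eigenvalue of the $\Sigma$-action on $\mathrm{im}(z_1)$.
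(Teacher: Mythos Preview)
Your proof is correct and follows the same overall strategy as the paper: fix the conjugacy class of $\langle z_1\rangle$, solve the linear condition $[z_1,z_2]=z_1$, and then normalize $z_2$ using the stabilizer of $\langle z_1\rangle$ in $SL(3,\mathbb{C})$. The two approaches differ in presentation rather than in substance, but two of your choices are worth highlighting.

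First, you invoke Lie's theorem to see that $z_1\in[K,K]$ is nilpotent, so only the cases $z_1\sim x_1+x_2$ and $z_1\sim x_1$ arise. The paper instead runs through all four one-dimensional types $J^1,J^2,J^3,J^{4,\alpha}$ and checks by hand that $[z_1,z_2]=z_1$ has no solution in the last two; your shortcut is cleaner. Second, and more substantially, for the inequivalence statements the paper simply appeals to ``direct computation'', whereas you produce explicit invariants: the rank of a generator of $[K,K]$, the scalar $\mu$ by which the coset $\Sigma=\{w\in K:[w,z_1]=-z_1\}$ acts on the line $\mathrm{im}(z_1)$, and the diagonalizability of the elements of $\Sigma$. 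These invariants are manifestly preserved under conjugation in $SL(3,\mathbb{C})$ and separate all of $K_2^1,K_2^2,K_2^3,K_2^{4,\alpha}$; in particular $\mu=\alpha$ for $K_2^{4,\alpha}$ gives the ``only if'' direction of $K_2^{4,\alpha}\sim K_2^{4,\beta}\Leftrightarrow\alpha=\beta$ with no computation at all. This is a genuine improvement over the paper's treatment of inequivalence. One small point you left implicit but which does check out: in the diagonalizable subcase, the eigenvector of $z_2$ for the eigenvalue $-2\beta-1$ always has vanishing second coordinate (when $\beta\neq-\tfrac13,-\tfrac23$), and in the degenerate diagonalizable cases $\beta=-\tfrac13,\gamma=0$ and $\beta=-\tfrac23,\delta=0$ one can still choose eigenbases meeting the shape constraints on the stabilizer, so the conjugation to $K_2^{4,\beta}$ really goes through.
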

\begin{proof}
Let $K$ be a subalgebra of $\sll$ isomorphic to $K_2$ and let $z_1, z_2 \in K$ be nonzero such that $[z_1,z_2]=z_1$.  After conjugation in $\mathrm{SL}(3,\mathbb{C})$, we
may assume $z_1 \in J^1$, $J^2$, $J^3$, or $J^{4,\alpha}$, for some $\alpha \in \mathbb{C}$.

\vspace{2mm}

\noindent \underline{Case 1}.  $z_1 \in J^1 =\langle x_1+x_2\rangle$.  After scalar multiplication, we may 
assume $z_1=x_1+x_2$.  The commutation relation $[z_1,z_2]=z_1$ implies
$z_2= -h_1-h_2+ a x_3+b (x_1+x_2)$, for some $a, b \in \mathbb{C}$.
Hence, $K =\langle x_1+x_2, -h_1-h_2+ a x_3\rangle$.  After conjugation by 
\begin{equation}
\begin{array}{llll}
  \left(
\begin{array}{ccc}
1 & 0 & -\frac{a}{2}   \\
 0&  1 &0   \\
 0& 0 & 1
\end{array}
\right)\in \mathrm{SL}(3,\mathbb{C}),
\end{array}
\end{equation}
we have $K\sim \langle x_1+x_2, h_1+h_2\rangle \equiv K^1_2$.

\vspace{2mm}

\noindent \underline{Case 2}.  $z_1 \in J^2 =\langle x_1\rangle$.  After scalar multiplication, we may assume
$z_1= x_1$.  The commutation relation $[z_1,z_2]=z_1$ implies
$z_2= ah_1+(2a+1)h_2+bx_1+cx_3+dy_2$, for some $a, b, c, d \in \mathbb{C}$.   This implies
$K =\langle x_1, ah_1+(2a+1)h_2+cx_3+dy_2\rangle$.

\vspace{2mm}

\noindent \underline{Case 2.1}.  $a\neq -\frac{1}{3}, -\frac{2}{3}$.  Conjugation by
\begin{equation}
 \left(
\begin{array}{ccc}
1 & -\frac{cd}{3a+2} & \frac{c}{3a+1}   \\
 0 & 1 & 0   \\
 0& \frac{d}{3a+2} &1  
\end{array}
\right) \in \mathrm{SL}(3,\mathbb{C})
\end{equation}
yields $\langle x_1, a h_1+(2a+1)h_2\rangle$.

\vspace{2mm}

\noindent \underline{Case 2.2.1}.  $a= -\frac{1}{3}$, $c\neq 0$.  Conjugation by
\begin{equation}
 \left(
\begin{array}{ccc}
\lambda & -cd\lambda  & 0   \\
 0 &\lambda & 0   \\
 0& d\lambda &\frac{1}{\lambda^2}  
\end{array}
\right) \in \mathrm{SL}(3,\mathbb{C}),
\end{equation}
for $\lambda \in \mathbb{C}$ such that $\lambda^3=c$, yields $\langle x_1, -\frac{1}{3}h_1+\frac{1}{3}h_2+x_3\rangle$. 

\vspace{2mm}

\noindent \underline{Case 2.2.2}.  $a= -\frac{1}{3}$, $c=0$.  Conjugation by
\begin{equation}
 \left(
\begin{array}{ccc}
1 & 0 & 0   \\
 0 & 1 & 0   \\
 0& d &1  
\end{array}
\right) \in \mathrm{SL}(3,\mathbb{C})
\end{equation}
yields $\langle x_1, -\frac{1}{3}h_1+\frac{1}{3}h_2\rangle=\langle x_1, h_1-h_2\rangle$.

\vspace{2mm}

\noindent \underline{Case 2.3.1}.  $a= -\frac{2}{3}$, $d\neq0$.  Conjugation  by
\begin{equation}
 \left(
\begin{array}{ccc}
\lambda & -\frac{c(\lambda^2-1)}{\lambda^2} & -c+\frac{c(e^2-1)}{e^2}   \\
 0 & \lambda & 0   \\
 0& 1 &\frac{1}{\lambda^2}  
\end{array}
\right) \in \mathrm{SL}(3,\mathbb{C}),
\end{equation}
where $\lambda \in \mathbb{C}$ such that $d \lambda^3=1$,
yields $\langle x_1, -\frac{2}{3}h_1-\frac{1}{3}h_2+y_2\rangle$.

\vspace{2mm}

\noindent \underline{Case 2.3.2}.  $a= -\frac{2}{3}$, $d=0$.  Conjugation  by
\begin{equation}
 \left(
\begin{array}{ccc}
1 & 0 &-c   \\
 0 & 1 & 0   \\
 0& 0 &1  
\end{array}
\right) \in \mathrm{SL}(3,\mathbb{C})
\end{equation}
yields $\langle x_1, -\frac{2}{3}h_1-\frac{1}{3}h_2\rangle =\langle x_1, 2h_1+h_2\rangle$.

\vspace{2mm}

\noindent \underline{Case 3}.  $z_1 \in J^3 =\langle h_1+2h_2+x_1\rangle$.  After scalar multiplication, we may assume
$z_1= h_1+2h_2+x_1$.  The commutation relation $[z_1,z_2]=z_1$ has no solution, implying  that this case does not yield 
a subalgebra isomorphic to $K_2$.

\vspace{2mm}

\noindent \underline{Case 4}.  $z_1 \in J^{4, \alpha}=\langle h_1+\alpha h_2\rangle$.  After scalar multiplication, we may assume
$z_1= h_1+\alpha h_2$.  The commutation relation $[z_1,z_2]=z_1$ has no solution, implying that  this case does not yield 
a subalgebra isomorphic to $K_2$.

By cases $1$-$4$, each subalgebra of $A_2$ isomorphic to $K_2$ is equivalent to one of the following subalgebras:
\begin{equation}
 \arraycolsep=1.4pt\def\arraystretch{1.5}
\begin{array}{llllllllllllll}
K^1_2 &\equiv& \langle x_1+x_2, h_1+h_2\rangle,\\
K^2_2&\equiv& \langle x_1, -\frac{1}{3}h_1+\frac{1}{3}h_2+x_3\rangle,\\
K^3_2&\equiv& \langle x_1, -\frac{2}{3}h_1-\frac{1}{3}h_2+y_2\rangle,\\
K^{4,\alpha}_2&\equiv& \langle x_1, \alpha h_1+(2\alpha+1) h_2\rangle,\\
\end{array}
\end{equation}
for some $\alpha \in \mathbb{C}$.  

If $K^1_2$ were equivalent to $K^2_2$, then there would exist $A\in \mathrm{SL}(3, \mathbb{C})$ such that
$A^{-1} K^1_2 A= K^2_2$.  However, no such $A$ exists, as can be check by direct
computation.  Similarly,  $K^3_2$ and $K^{4, \alpha}_2$ are inequivalent to $K^1_2$; 
$K^2_2$ and $K^3_2$ are inequivalent; $K^2_2$ and $K^{4,\alpha}$   are inequivalent; and 
 $K^3_2$ and $K^{4,\alpha}$   are inequivalent.  Direct computation establishes that $K^{4,\alpha}_2$ is conjugate to $K^{4,\beta}_2$ if and only if
$\alpha=\beta$. \end{proof}

\subsection{Three-dimensional, solvable subalgebras of $\sll$}

In this subsection we use the following lemma from \cite{degraafa} (Lemma $2.1$):
\begin{lemma}\label{graaff}
Let $T$  be a solvable Lie algebra.  Then there is a subalgebra $S \subset T$ of codimension $1$, and $z\in T$  such that
$T = \langle z \rangle \inplus S$. 
\end{lemma}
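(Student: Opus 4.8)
The plan is to realise the codimension-one subalgebra $S$ as an \emph{ideal} built from the derived algebra $[T,T]$; this works because a solvable Lie algebra is abelian modulo its derived algebra, so \emph{any} subspace lying between $[T,T]$ and $T$ is automatically an ideal. If $\dim T \le 1$ the claim is trivial (take $S=0$ and $z$ any nonzero element), so I would assume $\dim T \ge 2$.

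The first key point I would establish is that $[T,T]$ is a proper subspace of $T$, i.e.\ $[T,T]\subsetneq T$: if $[T,T]=T$, the derived series of $T$ would be constant and never reach $0$, contradicting solvability. This is the only place solvability is used. Given this, I would extend a basis of $[T,T]$ to a basis of $T$ and let $S$ be the span of all of these basis vectors but one, chosen so that $[T,T]\subseteq S$ and $S$ has codimension exactly $1$ in $T$; such a choice exists precisely because $[T,T]\ne T$.

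Next I would check that $S$ is automatically an ideal of $T$: for $s\in S$ and $t\in T$ one has $[t,s]\in[T,T]\subseteq S$. In particular $S$ is a subalgebra of $T$ of codimension $1$. Finally I would pick any $z\in T\setminus S$; then $\langle z\rangle$ is a one-dimensional (hence abelian) subalgebra, $\langle z\rangle\cap S=0$ and $\langle z\rangle+S=T$ by the codimension count, and since $S$ is an ideal we have $[\langle z\rangle,S]\subseteq S$. Thus the decomposition $T=\langle z\rangle+S$ is exactly a semidirect sum $T=\langle z\rangle\inplus S$, as required.

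There is no serious obstacle here; the argument is short, and the only points needing a moment's care are disposing of the degenerate low-dimensional cases and observing that the properness of $[T,T]$ — the step invoking solvability — is genuinely essential, since the statement fails for a nonzero semisimple $T$.
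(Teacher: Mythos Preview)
Your argument is correct and is in fact the standard one: use solvability to get $[T,T]\subsetneq T$, choose any hyperplane $S$ with $[T,T]\subseteq S\subsetneq T$, observe that such an $S$ is automatically an ideal, and take $z\in T\setminus S$. The only implicit assumption you are using is that $T$ is finite-dimensional (needed to produce a codimension-one hyperplane containing $[T,T]$), which is the ambient hypothesis throughout the paper. A tiny quibble: your handling of the degenerate case should read ``$\dim T = 1$'' rather than ``$\dim T\le 1$'', since for $T=0$ there is no codimension-one subalgebra and no nonzero $z$; but the lemma is only ever applied in the paper to $T$ of dimension at least three, so this is harmless.

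As for comparison with the paper: the paper does not actually prove this lemma. It simply quotes it as Lemma~2.1 of de~Graaf's classification paper \cite{degraafa} and uses it as a black box. Your proof supplies exactly the short argument that de~Graaf gives (and that one finds in most textbook treatments), so there is no meaningful difference in approach to discuss --- you have filled in what the paper outsourced.
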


\begin{theorem}\label{threeds}
A classification of the  $3$-dimensional solvable subalgebras of $\sll$, up to inner automorphism, is given by:
 \begin{equation}
 \arraycolsep=1.4pt\def\arraystretch{1.2}
\begin{array}{llllllll}
L_{2}^1 &=& \langle x_1, x_3, 2h_1+h_2  \rangle,\\
L_{2}^2 &=& \langle x_1, y_2, h_1-h_2 \rangle,\\
L_{3, -\frac{1}{4}}^1 &=& \langle x_1, x_3, 2h_1+h_2+x_2 \rangle,\\
L^2_{3, -\frac{1}{4}} &=& \langle y_1, y_3, 2h_1+ h_2+x_2  \rangle,\\
L^{1}_{3,-\frac{2}{9}}&=& \langle x_1+x_2, x_3, h_1+h_2\rangle,\\
L^{1}_{3,0}&=& \langle x_1, h_1, h_2\rangle,\\
L^{1, \alpha}_{3,-\frac{(2\alpha-1)(\alpha-2)}{9(\alpha-1)^2}}&=& \langle x_1, x_3, (\alpha-1)h_1+ \alpha h_2 \rangle,\\
 L^{2, \alpha}_{3,-\frac{(2\alpha-1)(\alpha-2)}{9(\alpha-1)^2}}&=& \langle x_1, y_2, h_1+ \alpha h_2 \rangle,\\
L_{4}^1 &=& \langle x_1, x_3, h_2 \rangle,\\
L_{4}^2 &=& \langle x_1, y_2, h_1+h_2 \rangle,\\
L^{1}_{5}&=& \langle x_1, x_2, x_3\rangle,\\
\end{array}
\end{equation}
for $\alpha \neq \pm 1$. The subscripts correspond to those of the appropriate 
isomorphism type in Equation (\ref{Eq:deg3}). Further, $L^{1, \alpha}_{3,-\frac{(2\alpha-1)(\alpha-2)}{9(\alpha-1)^2}}$ (resp. $L^{2, \alpha}_{3,-\frac{(2\alpha-1)(\alpha-2)}{9(\alpha-1)^2}}$) is 
equivalent to $L^{1, \beta}_{3,-\frac{(2\beta-1)(\beta-2)}{9(\beta-1)^2}}$ (resp. $L^{2, \beta}_{3,-\frac{(2\beta-1)(\beta-2)}{9(\beta-1)^2}}$) if and only if $\alpha=\beta$ or $\alpha \beta=1$.
\end{theorem}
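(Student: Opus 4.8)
The plan is to classify all $3$-dimensional solvable subalgebras $T \subset \sll$ by exploiting Lemma \ref{graaff}, which decomposes $T = \langle z \rangle \inplus S$ with $S$ a $2$-dimensional (hence solvable) subalgebra. Since we have already classified the $2$-dimensional subalgebras in Theorems \ref{kone} and \ref{ktwo}, we may conjugate so that $S$ is one of $K_1^1,\dots,K_1^5$ or $K_2^1,\dots,K_2^{4,\alpha}$. For each such $S$, the requirement that $\langle z \rangle \inplus S$ be a subalgebra forces $[z, S] \subseteq T$; writing $z$ in general form and imposing this closure condition produces linear (and in some cases polynomial) constraints on the entries of $z$. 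After solving these, we use the residual stabilizer of $S$ in $SL(3,\mathbb{C})$ — the elements fixing $S$ setwise — to normalize $z$ further, then read off which isomorphism type in Equation \eqref{Eq:deg3} arises by computing the commutators $[z,z_1], [z,z_2], [z_1,z_2]$. Each type $L_1, L_2, L_{3,a}, L_4, L_5$ is detected by the action of the "$z$"-direction on the derived algebra, so the parameter $a$ (when the type is $L_{3,a}$) is computed from the characteristic polynomial of $\ad z$ restricted to the $2$-dimensional ideal.

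The concrete steps, in order: (i) eliminate the abelian case $S \cong K_1$ when it cannot extend (e.g. $K_1^5 = \langle h_1, h_2\rangle$ extends only by a root vector, giving $L_{3,0}^1 = \langle x_1, h_1, h_2\rangle$ of type $L_{3,0}$, while $K_1^1, K_1^3$ give $L_5^1 = \langle x_1,x_2,x_3\rangle$ and various $L_4$ and $L_2$ types); (ii) handle $S \cong K_2 = \langle z_1, z_2\rangle$ with $[z_1,z_2]=z_1$ — here $T$ must contain $z_1 = [z_1,z_2]$ in its derived algebra, and the new generator together with $z_1$ typically spans the nilradical, yielding the $L_{3,a}$ families; (iii) for the diagonalizable Cartan-type pieces, track how adding a root vector $x_i$ to a $2$-dimensional toral subalgebra produces $L_{3,a}$ with $a$ a rational function of the toral weights, which after normalization gives the stated formula $a = -\frac{(2\alpha-1)(\alpha-2)}{9(\alpha-1)^2}$; (iv) collect all representatives, then run pairwise inequivalence checks (matching Jordan types of elements, dimensions of derived series, nilradicals, whether the algebra contains nilpotent vs. semisimple elements) to confirm the list has no redundancies beyond the claimed parameter identifications.

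For the final equivalence claim — that $L^{1,\alpha}_{3,\bullet} \sim L^{1,\beta}_{3,\bullet}$ iff $\alpha = \beta$ or $\alpha\beta = 1$ — I would argue as follows. The "if" direction: exhibit an explicit $A \in SL(3,\mathbb{C})$ (a permutation-type matrix swapping the appropriate eigenspaces) conjugating $\langle x_1, x_3, (\alpha-1)h_1 + \alpha h_2\rangle$ to $\langle x_1, x_3, (\beta-1)h_1 + \beta h_2\rangle$ when $\alpha\beta = 1$; the relabeling of diagonal entries induced by $\alpha \mapsto 1/\alpha$ must be matched to a Weyl-group element of $A_2$ preserving the subalgebra structure. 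For the "only if" direction: any equivalence must, by Lemma \ref{lem1}-type reasoning applied to the nilradical (the unique maximal nilpotent ideal, here $\langle x_1, x_3\rangle$), carry nilradical to nilradical and hence the toral complement to a toral element with the same spectrum up to the $SL(3,\mathbb{C})$-conjugacy on diagonal matrices modulo the stabilizer of the nilradical; this stabilizer is a parabolic whose Weyl part is generated by the single transposition realizing $\alpha \leftrightarrow 1/\alpha$, and every other transposition fails to preserve the nilradical, so no further identifications occur. The analogous argument handles $L^{2,\alpha}_{3,\bullet}$.

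The main obstacle I expect is step (iii): correctly computing the isomorphism type $L_{3,a}$ and its parameter $a$ for the two-parameter-reduced-to-one-parameter toral-plus-root-vector families. The bookkeeping — keeping track of which $SL(3,\mathbb{C})$ conjugations are still available after fixing $S$, getting the normalization of the root vector's coefficient right (the cube-root scalings that appear repeatedly in Theorems \ref{kone} and \ref{ktwo}), and then extracting $a$ from $\ad z$ so that it comes out as exactly $-\frac{(2\alpha-1)(\alpha-2)}{9(\alpha-1)^2}$ rather than some other Möbius-equivalent expression — is delicate and is where sign or factor errors would creep in. The inequivalence verifications, while "direct linear algebra," are also numerous; the key simplification is that derived-series dimensions and nilradical structure already separate most isomorphism types, so only the same-type comparisons (the $L_{3,a}$ with varying $a$, and the pairs $L_2^1$ vs $L_2^2$, $L_4^1$ vs $L_4^2$, etc.) require genuine computation.
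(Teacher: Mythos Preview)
Your approach is essentially the paper's: decompose $T = \langle z\rangle \inplus S$ via Lemma~\ref{graaff} with $S$ a codimension-one ideal, run through the nine two-dimensional classes $K_1^1,\dots,K_1^5,K_2^1,\dots,K_2^{4,\alpha}$, impose closure, normalize $z$ using the stabilizer of $S$ in $SL(3,\mathbb{C})$, and read off the isomorphism type. Two small corrections are worth flagging. First, the closure condition you need is $[z,S]\subseteq S$ (since $S$ is the ideal in the semidirect sum furnished by Lemma~\ref{graaff}), not merely $[z,S]\subseteq T$; with the correct condition, $K_1^5=\langle h_1,h_2\rangle$ does \emph{not} extend at all---any $z$ with $[z,h_i]\in\langle h_1,h_2\rangle$ is already diagonal---and $L_{3,0}^1=\langle x_1,h_1,h_2\rangle$ instead arises from the case $S=K_1^2=\langle x_1,h_1+2h_2\rangle$. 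Second, for the ``only if'' direction of the parametrized equivalence the paper takes a shorter route than your nilradical/parabolic-stabilizer argument: equivalence forces abstract isomorphism, hence equality of the de~Graaf parameter $a=-\tfrac{(2\alpha-1)(\alpha-2)}{9(\alpha-1)^2}$, and the resulting equation factors directly as $(\alpha\beta-1)(\alpha-\beta)=0$. Your structural argument would also work, but the isomorphism-invariant shortcut avoids the Weyl-group bookkeeping entirely.
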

\begin{proof}
Let $L$ be a solvable, $3$-dimensional subalgebra of $\sll$.  Then, by Lemma \ref{graaff}, 
\begin{equation}
L=\langle z \rangle \inplus K,
\end{equation}
where $K$ is a $2$-dimensional (solvable) subalgebra, and $z\in L$.  By Theorems \ref{kone} and \ref{ktwo}, after conjugation by an element of $\mathrm{SL}(3, \mathbb{C})$, we have the following cases.

\vspace{2mm}

\noindent \underline{Case $1$}. $L=\langle z \rangle \inplus K^1_1$ $=$ $\langle z \rangle \inplus \langle x_1+x_2, x_3 \rangle$.  The conditions $[z, x_1+x_2] \subseteq   \langle x_1+x_2, x_3\rangle$, and  $[z,  x_3] \subseteq   \langle x_1+x_2, x_3\rangle$ imply
that $z=a( h_1+h_2)+bx_1+cx_2+dx_3$, for some $a, b, c, d \in \mathbb{C}$, not all zero.   The values of $a, b,$ and $c$ yield the following two possibilities:
\begin{equation}
 \arraycolsep=1.4pt\def\arraystretch{1.5}
\begin{array}{llllll}
L\sim \langle x_1+x_2, x_3, h_1+h_2 \rangle, &\text{or}~ 
\langle x_1, x_2, x_3\rangle.
\end{array}
\end{equation}
The subalgebra $\langle x_1+x_2, x_3, h_1+h_2 \rangle$ is isomorphic to $L_{3, -\frac{2}{9}}$  via the isomorphism given by
\begin{equation}
 \arraycolsep=1.4pt\def\arraystretch{1.5}
\begin{array}{llllll}
z_1 &\mapsto & x_1+x_2+x_3,\\
z_2 &\mapsto &  \frac{1}{3}(x_1+x_2)+\frac{2}{3}x_3,\\
z_3 &\mapsto & \frac{1}{3}(h_1+h_2).
\end{array}
\end{equation}
The subalgebra $\langle x_1, x_2, x_3  \rangle$ is isomorphic to $L_5$ via the isomorphism given by
\begin{equation}
 \arraycolsep=1.4pt\def\arraystretch{1.5}
\begin{array}{llllll}
z_1 &\mapsto & x_1,\\
z_2 &\mapsto & x_3,\\
z_3 &\mapsto & x_2.
\end{array}
\end{equation}
Define
\begin{equation}
\begin{array}{llllll}
L_{3, -\frac{2}{9}}^1 &\equiv& \langle x_1+x_2, x_3, h_1+h_2 \rangle, \\
L_5^1 &\equiv& \langle x_1, x_2, x_3  \rangle.
\end{array}
\end{equation}

\vspace{2mm}

\noindent \underline{Case $2$}. $L=\langle z \rangle \inplus K^2_1$ $=$ $\langle z \rangle \inplus \langle x_1, h_1+2h_2 \rangle$.  The conditions $[z, x_1] \subseteq   \langle x_1, h_1+2h_2\rangle$, and  $[z,  h_1+2h_2] \subseteq   \langle x_1, h_1+2h_2\rangle$ imply
that $z=a h_1+ bh_2+cx_1$, for some $a, b, c \in \mathbb{C}$, not all zero.   Then, $L =\langle x_1, h_1, h_2\rangle$.  The subalgebra $\langle x_1, h_1, h_2  \rangle$ is isomorphic to $L_{3, 0}$ via the isomorphism given by
\begin{equation}
 \arraycolsep=1.4pt\def\arraystretch{1.5}
\begin{array}{llllll}
z_1 &\mapsto & h_1+2h_2+x_1,\\
z_2 &\mapsto & x_1,\\
z_3 &\mapsto & \frac{1}{2}h_1+x_1.
\end{array}
\end{equation}
Define
\begin{equation}
\begin{array}{llllll}
L_{3, 0}^1 &\equiv& \langle x_1, h_1, h_2  \rangle.
\end{array}
\end{equation}

\vspace{2mm}

\noindent \underline{Case $3$}. $L=\langle z \rangle \inplus K^3_1$ $=$ $\langle z \rangle \inplus \langle x_1, x_3 \rangle$.  
The conditions
$[z, x_1] \subseteq   \langle x_1, x_3 \rangle$, and  $[z,  x_3] \subseteq   \langle x_1, x_3\rangle$ imply
that $L =\langle  x_1, x_3, ax_2+ by_2+ ch_1+dh_2\rangle$, $a, b, c, d \in \mathbb{C}$,  not all zero. 
In matrix form, we have
\begin{equation}
ax_2+ by_2+ ch_1+dh_2=  \left(
\begin{array}{ccc}
c & 0 & 0   \\
 0 & -c+d & a   \\
 0& b & -d  
\end{array}
\right).
\end{equation}
We now consider the cases $4ab+c^2-4cd+4d^2 \neq 0$, and 
$4ab+c^2-4cd+4d^2 = 0$.

\vspace{2mm}

\noindent \underline{Case $3.1$}.  $4ab+c^2-4cd+4d^2 \neq 0$.  In this case $ax_2+ by_2+ ch_1+dh_2$ is diagonalizable, as is its lower right $2\times 2$ block matrix, of course.
For any block matrix 
\begin{equation}
A= \left(
\begin{array}{ccc}
\frac{1}{\det(G)} & 0    \\
 0 &  G
\end{array}
\right) \in \mathrm{SL}(3, \mathbb{C}),
\end{equation}
where $G\in \mathrm{GL}(2, \mathbb{C})$,
$A^{-1} \langle x_1, x_3\rangle A=  \langle x_1, x_3\rangle $.  We may choose such a block matrix $A$  which diagonalizes the lower right $2\times 2$ block of  $ax_2+ by_2+ ch_1+dh_2$ such that  $A^{-1} \langle ax_2+ by_2+ ch_1+dh_2 \rangle A= \langle (\alpha-1) h_1+\alpha h_2\rangle$, for some $\alpha \in \mathbb{C}$.  We now proceed in cases based on the value of
$\alpha$.

\vspace{2mm}

\noindent \underline{Case $3.1.1$}.  $\alpha=-1$. Then $L$ is equivalent to $\langle x_1, x_3, 2h_1+h_2\rangle$, which is isomorphic to $L_2$ via the isomorphism given by
\begin{equation}
 \arraycolsep=1.4pt\def\arraystretch{1.5}
\begin{array}{llllll}
z_1 &\mapsto & x_1,\\
z_2 &\mapsto & x_3,\\
z_3 &\mapsto & \frac{1}{3}(2h_1+h_2).
\end{array}
\end{equation}
Define
\begin{equation}
\begin{array}{llllll}
L_{2}^1 &\equiv& \langle x_1, x_3, 2h_1+h_2  \rangle.
\end{array}
\end{equation}

\vspace{2mm}

\noindent \underline{Case $3.1.2$}.  $\alpha=1$. Then $L$ is equivalent to $\langle x_1, x_3, h_2\rangle$, which is isomorphic to $L_{4}$ via the isomorphism given by
\begin{equation}
 \arraycolsep=1.4pt\def\arraystretch{1.5}
\begin{array}{llllll}
z_1 &\mapsto & x_1+x_3,\\
z_2 &\mapsto & x_1-x_3,\\
z_3 &\mapsto & -h_2.
\end{array}
\end{equation}
Define
\begin{equation}
\begin{array}{llllll}
L_{4}^1 &\equiv& \langle x_1, x_3, h_2 \rangle.
\end{array}
\end{equation}

\vspace{2mm}

\noindent \underline{Case $3.1.3$}.  $\alpha\neq  \pm 1$.   The subalgebra $\langle x_1, x_3, (\alpha-1)h_1+\alpha h_2  \rangle$ is isomorphic to $L_{3, -\frac{(2\alpha-1)(\alpha-2)}{9(\alpha-1)^2}}$ via the isomorphism given by
\begin{equation}
 \arraycolsep=1.4pt\def\arraystretch{1.5}
\begin{array}{llllll}
z_1 &\mapsto &x_1+x_3,\\
z_2 &\mapsto & \frac{\alpha-2}{3 (\alpha-1)}x_1+\frac{2\alpha-1}{3(\alpha-1)} x_3,\\
z_3 &\mapsto &  \frac{1}{3 (\alpha-1)}( (\alpha-1)h_1+\alpha h_2).
\end{array}
\end{equation}
For $\alpha \neq \pm 1$, define
\begin{equation}
L_{3, -\frac{(2\alpha-1)(\alpha-2)}{9(\alpha-1)^2}}^{1, \alpha} \equiv \langle x_1, x_3, (\alpha-1)h_1+\alpha h_2  \rangle.
\end{equation}

We now establish that $L_{3, -\frac{(2\alpha-1)(\alpha-2)}{9(\alpha-1)^2}}^{1, \alpha}$ is equivalent to $L_{3, -\frac{(2\beta-1)(\beta-2)}{9(\beta-1)^2}}^{1, \beta}$ if and only if
$\alpha =\beta$ or $\alpha \beta=1$.  First suppose that $L_{3, -\frac{(2\alpha-1)(\alpha-2)}{9(\alpha-1)^2}}^{1, \alpha}$ is equivalent to $L_{3, -\frac{(2\beta-1)(\beta-2)}{9(\beta-1)^2}}^{1, \beta}$, then these subalgebras are  isomorphic so that   $-\frac{(2\alpha-1)(\alpha-2)}{9(\alpha-1)^2}=-\frac{(2\beta-1)(\beta-2)}{9(\beta-1)^2}$. Then
\begin{equation}\label{wowg}
 \arraycolsep=1.4pt\def\arraystretch{1.3}
\begin{array}{lllll}
(2\alpha-1)(\alpha-2)(\beta-1)^2=(2\beta-1)(\beta-2)(\alpha-1)^2.
\end{array}
\end{equation}
Expanding, and simplifying Eq. \eqref{wowg}  yields
\begin{equation}
 \arraycolsep=1.4pt\def\arraystretch{1.3}
\begin{array}{lllll}
&\alpha^2\beta-\alpha \beta^2-\alpha+\beta=0 \\
\Rightarrow & (\alpha \beta -1)(\alpha-\beta)=0\\
\Rightarrow & \alpha \beta =1 ~\text{or}~\alpha=\beta.
\end{array}
\end{equation}

Now suppose that $\alpha \beta =1$, or $\alpha=\beta$, then $-\frac{(2\alpha-1)(\alpha-2)}{9(\alpha-1)^2}=-\frac{(2\beta-1)(\beta-2)}{9(\beta-1)^2}$ so that $L_{3, -\frac{(2\alpha-1)(\alpha-2)}{9(\alpha-1)^2}}^{1, \alpha}$ is isomorphic to $L_{3, -\frac{(2\beta-1)(\beta-2)}{9(\beta-1)^2}}^{1, \beta}$.  Further, they are equivalent via conjugation by
\begin{equation}
 \left(
\begin{array}{ccc}
1 & 0 &0   \\
 0 & 0 & 1   \\
 0& -1 &0  
\end{array}
\right) \in \mathrm{SL}(3,\mathbb{C}),
\end{equation}
in the case that $\alpha \beta=1$.

We now  establish that $L_{3, -\frac{(2\alpha-1)(\alpha-2)}{9(\alpha-1)^2}}^{1, \alpha}$ is not equivalent to any subalgebra defined
previously in this proof.  In particular, we must consider $\alpha\neq \pm 1$ for which
$-\frac{(2\alpha-1)(\alpha-2)}{9(\alpha-1)^2}=-\frac{2}{9}$, or $0$.  That is, we must
consider $\alpha =0, \frac{1}{2},$ and $2$. But, we may exclude the $\alpha=\frac{1}{2}$ case since $L_{3,0}^{1, \frac{1}{2}}$ is  equivalent to 
$L_{3,0}^{1, 2}$.

If $\alpha=0$, then $L_{3, -\frac{(2\alpha-1)(\alpha-2)}{9(\alpha-1)^2}}^{1, \alpha}=L_{3, -\frac{2}{9}}^{1, 0}$.  Straightforward calculation shows that
$L_{3, -\frac{2}{9}}^{1, 0}$ is inequivalent to $L_{3, -\frac{2}{9}}^{1}$.

If $\alpha=2$, then $L_{3, -\frac{(2\alpha-1)(\alpha-2)}{9(\alpha-1)^2}}^{1, \alpha}=L_{3,0}^{1, 2}$.  Straightforward calculation shows that
$L_{3, 0}^{1, 2}$ is inequivalent to $L_{3, 0}^{1}$.

\vspace{2mm}

\noindent \underline{Case $3.2$}.  $4ab+c^2-4cd+4d^2 = 0$.  Then $d=\frac{1}{2}c\pm\sqrt{-ab}$, so that
\begin{equation}
L=\bigg\langle x_1, x_3, ax_2+by_2+ch_1+ \bigg(\frac{1}{2}c\pm\sqrt{-ab}\bigg) h_2\bigg\rangle.
\end{equation}

\vspace{2mm}

\noindent \underline{Case $3.2.1$}.  $a\neq 0$, $b\neq 0$, $c\neq 0$.   Then,  conjugation by 
\begin{equation}
A=\frac{1}{\sqrt[3]{-b}} \left(
\begin{array}{ccc}
1 & 0 &0   \\
 0 & \pm \sqrt{-ab} & 1   \\
 0& b &0  
\end{array}
\right) \in \mathrm{SL}(3,\mathbb{C}),
\end{equation}
yields $L \sim \langle x_1, x_3, ch_1+\frac{c}{2}h_2+x_2\rangle$. 
After an additional appropriate conjugation we get $L \sim \langle x_1, x_3, 2h_1+h_2+x_2\rangle$, which is isomorphic to $L_{3, -\frac{1}{4}}$  via the isomorphism given by 
\begin{equation}
 \arraycolsep=1.4pt\def\arraystretch{1.5}
\begin{array}{llllll}
z_1 &\mapsto & x_1+x_3,\\
z_2 &\mapsto & \frac{1}{2} x_1+\frac{2}{3}x_3,\\
z_3 &\mapsto & \frac{1}{6}(2h_1+h_2+x_2).
\end{array}
\end{equation}
Define
\begin{equation}
\begin{array}{llllll}
L_{3, -\frac{1}{4}}^1 &\equiv& \langle x_1, x_3, 2h_1+h_2+x_2 \rangle.
\end{array}
\end{equation}
Note that $L_{3, -\frac{1}{4}}$ is not isomorphic--let alone equivalent--to any subalgebra defined previously in this proof.  In particular, 
 $L_{3, -\frac{1}{4}} \ncong L_{3, -\frac{(2\alpha-1)(\alpha-2)}{9(\alpha-1)^2}}$ for all $\alpha \neq \pm 1$.

\vspace{2mm}

\noindent \underline{Case $3.2.2$}.  At least one of $a=0$, $b= 0$, or $c= 0$.  If precisely two of $a, b, c$ are zero, then $L$ is equal to 
$\langle x_1, x_3, 2h_1+h_2\rangle=L^1_2$, $\langle x_1, x_2, x_3\rangle=L^1_5$, or  $\langle x_1, x_3, y_2 \rangle\sim L_5^1$.

If $b=0$, and $a, c\neq 0$; or if   $a=0$, and $b, c\neq 0$, then, after appropriate conjugation in $\mathrm{SL}(3, \mathbb{C})$, $L \sim \langle x_1, x_3, 2h_1+h_2+x_2\rangle=L^1_{3, -\frac{1}{4}}$.

If $c=0$, and $a, b\neq 0$, then, after appropriate conjugation in $\mathrm{SL}(3, \mathbb{C})$, $L$ is equivalent to  $\langle x_1, y_2, y_3\rangle$, which is 
equivalent to $L_{5}^1$.

\vspace{2mm}

\noindent \underline{Case $4$}. $L=\langle z \rangle \inplus K^4_1$ $=$ $\langle z \rangle \inplus \langle x_1, y_2 \rangle$.  The conditions $[z, x_1] \subseteq   \langle x_1, y_2 \rangle$, and  $[z,  y_2] \subseteq   \langle x_1, y_2\rangle$ imply that $L =\langle x_1, y_2,   ax_3+ by_3+ ch_1+dh_2\rangle$, $a, b, c, d \in \mathbb{C}$, not all zero.  
 In matrix form, we have
\begin{equation}
ax_3+ by_3+ ch_1+dh_2=  \left(
\begin{array}{ccc}
 c& 0 & -a   \\
 0 & -c+d &  0  \\
 -b & 0 & -d  
\end{array}
\right).
\end{equation}
We now consider the cases $4ab+c^2+2cd+d^2\neq 0$, and 
$4ab+c^2+2cd+d^2= 0$.

\vspace{2mm}

\noindent \underline{Case $4.1$}.  $4ab+c^2+2cd+d^2\neq 0$.  In this case $ax_3+ by_3+ ch_1+dh_2$ is diagonalizable, as is the $2\times 2$ matrix comprised of its four corners.
For any  matrix 
\begin{equation}\label{abovet}
A= \left(
\begin{array}{ccc}
g_{11} & 0& g_{12}    \\
 0 &  \frac{1}{\det(G)} & 0\\
 g_{21} &0& g_{22}
\end{array}
\right) \in \mathrm{SL}(3, \mathbb{C}),
\end{equation}
where $G=[g_{ij}] \in \mathrm{GL}(2, \mathbb{C})$,
$A^{-1} \langle x_1, y_2\rangle A=  \langle x_1, y_2\rangle $.  We may choose a matrix $A$ of the form above  in Eq. \eqref{abovet} which diagonalizes the $2\times 2$ matrix comprised of the four corners of  $ax_3+ by_3+ ch_1+dh_2$ such that  $A^{-1} \langle  ax_3+ by_3+ ch_1+dh_2 \rangle A$$=$ $\langle  h_1+\alpha h_2\rangle$, for some $\alpha \in \mathbb{C}$.  We now proceed in cases based on the value of
$\alpha$.

\vspace{2mm}

\noindent \underline{Case $4.1.1$}. $\alpha=-1$.  In this case $L= \langle x_1, y_2, h_1-h_2\rangle$, which is isomorphic to $L_{2}$ via the isomorphism given by
\begin{equation}\label{goodness2}
 \arraycolsep=1.4pt\def\arraystretch{1.5}
\begin{array}{llllll}
z_1 &\mapsto & x_1+y_2,\\
z_2 &\mapsto & x_1-y_2,\\
z_3 &\mapsto & \frac{1}{3}(h_1-h_2).
\end{array}
\end{equation}
Define
\begin{equation}
\begin{array}{llllll}
L_{2}^2 &\equiv& \langle x_1, y_2, h_1-h_2 \rangle.
\end{array}
\end{equation}
Straightforward computation shows that $L_{2}^1$, and $L_{2}^2$ are inequivalent.

\vspace{2mm}

\noindent \underline{Case $4.1.2$}. $\alpha=1$.   In this case $L= \langle x_1, y_2, h_1+h_2\rangle$, which is isomorphic to $L_{4}$ via the isomorphism given by
\begin{equation}
 \arraycolsep=1.4pt\def\arraystretch{1.5}
\begin{array}{llllll}
z_1 &\mapsto & x_1+y_2,\\
z_2 &\mapsto & x_1-y_2,\\
z_3 &\mapsto & h_1+h_2.
\end{array}
\end{equation}
Define
\begin{equation}
\begin{array}{llllll}
L_{4}^2 &\equiv& \langle x_1, y_2, h_1+h_2 \rangle.
\end{array}
\end{equation}
Straightforward computation shows that $L_{4}^1$ and $L_{4}^2$ are inequivalent.

\vspace{2mm}

\noindent \underline{Case $4.1.3$}. $\alpha\neq \pm 1$.   In this case the subalgebra $L= \langle x_1, y_2, h_1+\alpha h_2\rangle$ is isomorphic to  
$L_{3, -\frac{(2\alpha-1)(\alpha-2)}{9 (\alpha-1)^2} }$ via the isomorphism given by
\begin{equation}\label{goodness3}
 \arraycolsep=1.4pt\def\arraystretch{1.5}
\begin{array}{llllll}
z_1 &\mapsto & x_1+y_2,\\
z_2 &\mapsto & \frac{1}{3(\alpha-1)}( (\alpha-2) x_1+(2\alpha-1)y_2),\\
z_3 &\mapsto &-\frac{1}{3(\alpha-1)}(h_1+\alpha h_2).
\end{array}
\end{equation}
For $\alpha \neq \pm 1$, define
\begin{equation}
L_{3, -\frac{(2\alpha-1)(\alpha-2)}{9(\alpha-1)^2}}^{2, \alpha} \equiv \langle x_1, y_2, h_1+\alpha h_2  \rangle.
\end{equation}

For $\alpha, \beta \neq \pm 1$, we establish that $L_{3, -\frac{(2\alpha-1)(\alpha-2)}{9(\alpha-1)^2}}^{2, \alpha}$ is equivalent to $L_{3, -\frac{(2\beta-1)(\beta-2)}{9(\beta-1)^2}}^{2, \beta}$  if and only if $\alpha=\beta$, or $\alpha \beta=1$, as we did in Case $3.1.3$ (see Remark \ref{goodness} below for details).

We now establish that $L_{3, -\frac{(2\alpha-1)(\alpha-2)}{9(\alpha-1)^2}}^{2, \alpha}$ is not equivalent to any subalgebra defined previously in this proof.
First, we must consider $\alpha\neq \pm 1$ for which
$-\frac{(2\alpha-1)(\alpha-2)}{9(\alpha-1)^2}=-\frac{1}{4}, -\frac{2}{9}$, or $0$.  That is, we must consider $\alpha=0, 2$; as in Case $3.3.1$ we may exclude the
$\alpha=\frac{1}{2}$ case.  

Second, we must consider $L_{3, -\frac{(2\alpha-1)(\alpha-2)}{9(\alpha-1)^2}}^{2, \alpha}$, and $L_{3, -\frac{(2\alpha-1)(\alpha-2)}{9(\alpha-1)^2}}^{1, \alpha}$ for $\alpha \neq \pm 1$.  But, straightforward computation shows these two subalgebras to be inequivalent.

If $\alpha=0$, then $L_{3, -\frac{(2\alpha-1)(\alpha-2)}{9(\alpha-1)^2}}^{2, \alpha}=L_{3, -\frac{2}{9}}^{2, 0}$.  Straightforward calculation shows that
$L_{3, -\frac{2}{9}}^{2, 0}$ is inequivalent to $L_{3, -\frac{2}{9}}^{1}$, and $L_{3, -\frac{2}{9}}^{1,0}$.

If $\alpha=2$, then $L_{3, -\frac{(2\alpha-1)(\alpha-2)}{9(\alpha-1)^2}}^{2, \alpha}=L_{3,0}^{2, 2}$.  Straightforward calculation shows that
$L_{3, 0}^{2, 2}$ is inequivalent to $L_{3, 0}^{1}$, and $L_{3,0}^{1,2}$.

\vspace{2mm}

\noindent \underline{Case $4.2$}.  $4ab+c^2+2cd+d^2 = 0$.  Then $d=-c\pm 2\sqrt{-ab} $, so that
\begin{equation}
L=\langle x_1, y_2, ax_3+by_3+ch_1+ ( -c\pm 2\sqrt{-ab}  ) h_2 \rangle.
\end{equation}

\vspace{2mm}

\noindent \underline{Case $4.2.1$}. $b\neq  0$.  
Then conjugation by
\begin{equation}
A=\frac{1}{\sqrt[3]{-b}} \left(
\begin{array}{ccc}
0 & \pm\sqrt{-ab} &1   \\
 1 & 0 & 0   \\
 0& -b &0  
\end{array}
\right) \in \mathrm{SL}(3,\mathbb{C}),
\end{equation}
yields $\langle y_1, y_3, (-2c\pm 2\sqrt{-ab})h_1+(c\mp \sqrt{-ab})h_2+x_2\rangle$.  After an additional appropriate 
conjugation we get $L \sim \langle y_1, y_3, 2h_1+h_2+x_2\rangle$, or $\langle x_2, y_1, y_3\rangle$.  

The subalgebra $\langle y_1, y_3, 2h_1+h_2+x_2\rangle$ is isomorphic to $L_{3, -\frac{1}{4}}$ via the isomorphism given by
\begin{equation}
 \arraycolsep=1.4pt\def\arraystretch{1.5}
\begin{array}{llllll}
z_1 &\mapsto & y_1+y_3,\\
z_2 &\mapsto & \frac{2}{3}y_1+\frac{1}{2}y_3,\\
z_3 &\mapsto & -\frac{1}{6}(2h_1+h_2+x_2).
\end{array}
\end{equation}
Define
\begin{equation}
L^2_{3, -\frac{1}{4}} \equiv \langle y_1, y_3, 2h_1+ h_2+x_2  \rangle.
\end{equation}
Direct computation shows that $L^1_{3, -\frac{1}{4}}$, and $L^2_{3, -\frac{1}{4}}$  are inequivalent.

The subalgebra $\langle x_2, y_1, y_3\rangle$ is isomorphic to $L_{5}$ via the isomorphism given by
\begin{equation}
 \arraycolsep=1.4pt\def\arraystretch{1.5}
\begin{array}{llllll}
z_1 &\mapsto & x_2,\\
z_2 &\mapsto & y_1,\\
z_3 &\mapsto & y_3.
\end{array}
\end{equation}
The subalgebra $\langle x_2, y_1, y_3\rangle$ is equivalent to $L^1_5$ via conjugation by
\begin{equation}
 \left(
\begin{array}{ccc}
0 & 0 &1   \\
 1 & -1 & 0   \\
 0& 1 &-1  
\end{array}
\right) \in \mathrm{SL}(3,\mathbb{C}).
\end{equation}

\vspace{2mm}

\noindent \underline{Case $4.2.2$}. $b=  0$.  If $a=0$, then $L=\langle x_1, y_2, h_1-h_2\rangle=L^2_2$.
If $c=0$, then $L=\langle x_1, x_3, y_2\rangle$, which is
isomorphic to $L_{5}$ via the isomorphism given by
\begin{equation}
 \arraycolsep=1.4pt\def\arraystretch{1.5}
\begin{array}{llllll}
z_1 &\mapsto & y_2,\\
z_2 &\mapsto & -x_1,\\
z_3 &\mapsto & x_3.
\end{array}
\end{equation}
The subalgebra $\langle x_1, x_3, y_2\rangle$ is equivalent to $L^1_5$.

If $a, c \neq 0$, then, after appropriate conjugation, $L \sim \langle x_1, y_2, h_1-h_2+x_3\rangle$, which is
isomorphic to $L_{3, -\frac{1}{4}}$ via the isomorphism given by
\begin{equation}
 \arraycolsep=1.4pt\def\arraystretch{1.5}
\begin{array}{llllll}
z_1 &\mapsto & x_1+y_2,\\
z_2 &\mapsto & \frac{1}{3}x_1+\frac{1}{2}y_2,\\
z_3 &\mapsto & \frac{1}{6}(h_1-h_2+x_3).
\end{array}
\end{equation}
The subalgebra $\langle x_1, y_2, h_1-h_2+x_3\rangle$ is equivalent to $L^2_{3, -\frac{1}{4}}$.

\vspace{2mm}

\noindent \underline{Case $5$}. $L=\langle z \rangle \inplus K^5_1$ $=$ $\langle z \rangle \inplus \langle h_1, h_2 \rangle$.  The conditions $[z, h_1] \subseteq   \langle h_1, h_2 \rangle$, and  $[z,  h_2] \subseteq   \langle h_1, h_2\rangle$ imply that $z$ is linear combination of $h_1$ and $h_2$, which is a contradiction to the fact that $L$ is $3$-dimensional.  Hence, this case does not yield a subalgebra.

\vspace{2mm}

\noindent \underline{Case $6$}. $L=\langle z \rangle \inplus K^1_2$ $=$ $\langle z \rangle \inplus \langle x_1+x_2, h_1+h_2 \rangle$.  The conditions $[z, x_1+x_2] \subseteq   \langle x_1+x_2, h_1+h_2 \rangle$, and  $[z,  h_1+h_2] \subseteq   \langle x_1+x_2, h_1+h_2\rangle$ imply that $z$ is a linear combination of $x_1+x_2$, and $h_1+h_2$, which is a contradiction to the fact that $L$ is $3$-dimensional.  Hence, this case does not yield a subalgebra.

\vspace{2mm}

\noindent \underline{Case $7$}. $L=\langle z \rangle \inplus K^2_2$ $=$ $\langle z \rangle \inplus \langle x_1, -\frac{1}{3}h_1+\frac{1}{3}h_2+x_3 \rangle$. The conditions
$[z, x_1+x_2] \subseteq   \langle x_1+x_2, h_1+h_2 \rangle$, and  $[z,  h_1+h_2] \subseteq   \langle x_1+x_2, h_1+h_2\rangle$ imply that
$z$ is a linear combination of  $-\frac{1}{3}h_1+\frac{1}{3}h_2$, and $x_3$.  Hence,
$L=\langle x_1, x_3, h_1-h_2\rangle$, which is isomorphic to $L_{3,0}$  via the isomorphism given by
\begin{equation}
 \arraycolsep=1.4pt\def\arraystretch{1.3}
\begin{array}{llllll}
z_1 &\mapsto & x_1+x_3,\\
z_2 &\mapsto & x_1,\\
z_3 &\mapsto & \frac{1}{3}(h_1-h_2).
\end{array}
\end{equation}
$L^{1,2}_{3,0}$ is equivalent to $\langle x_1, x_3, h_1- h_2  \rangle$ via conjugation by
\begin{equation}
 \left(
\begin{array}{ccc}
1 & 1 &-\frac{1}{3}   \\
 0 & 1 & 0   \\
 0& 0 &1  
\end{array}
\right) \in \mathrm{SL}(3,\mathbb{C}).
\end{equation}

\vspace{2mm}

\noindent \underline{Case $8$}. $L=\langle z \rangle \inplus K^3_2$ $=$ $\langle z \rangle \inplus \langle x_1, -\frac{2}{3}h_1-\frac{1}{3}h_2+y_2 \rangle$.  The conditions $[z, x_1] \subseteq   \langle x_1, -\frac{2}{3}h_1-\frac{1}{3}h_2+y_2 \rangle$, and  $[z,  -\frac{2}{3}h_1-\frac{1}{3}h_2+y_2] \subseteq   \langle x_1, -\frac{2}{3}h_1-\frac{1}{3}h_2+y_2 \rangle$ imply that
$z=a( 2h_1+h_2)+bx_1+cy_2$, for $a, b, c \in \mathbb{C}$.  
The requirement that $\dim L = 3$ then 
 implies $L=\langle x_1, y_2, 2h_1+h_2\rangle$, which is isomorphic to $L_{3,0}$  via the isomorphism given by
\begin{equation}
 \arraycolsep=1.4pt\def\arraystretch{1.5}
\begin{array}{llllll}
z_1 &\mapsto & x_1+y_2,\\
z_2 &\mapsto & x_1,\\
z_3 &\mapsto & \frac{1}{3}(2h_1+h_2).
\end{array}
\end{equation}
We have $\langle x_1, y_2, 2h_1+h_2  \rangle=L^{2,\frac{1}{2}}_{3,0}$.

\vspace{2mm}

\noindent \underline{Case $9$}. $L=\langle z \rangle \inplus K^{4, \alpha}_2$ $=$ $\langle z \rangle \inplus \langle x_1, \alpha h_1+(2\alpha+1)h_2 \rangle$.  The conditions $[z, x_1] \subseteq   \langle x_1, \alpha h_1+(2\alpha+1)h_2 \rangle$, and  $[z, \alpha h_1+(2\alpha+1)h_2] \subseteq   \langle x_1, \alpha h_1+(2\alpha+1)h_2 \rangle$ imply that
$L=\langle x_1, \alpha h_1+(2\alpha+1)h_2, ax_3+by_2+ch_1+dh_2\rangle$, $a, b, c, d \in \mathbb{C}$, such that  $a(3\alpha+1)=0$, and $b(3\alpha+2)=0$.
  
 The value of $\alpha$ ($\alpha=-\frac{1}{3}$; $\alpha=-\frac{2}{3}$; or $\alpha \neq -\frac{1}{3}, -\frac{2}{3}$) yields the following 
equivalence possibilities for $L$:
\begin{equation}
 \arraycolsep=1.4pt\def\arraystretch{1.3}
\begin{array}{lllllllllll}
 \langle x_1, x_3, h_1-h_2\rangle&\sim & L^{1,2}_{3,0}, \\
  \langle x_1, y_2, 2h_1+h_2\rangle&=& L^{2,\frac{1}{2}}_{3,0},  \\
   \langle x_1, h_1, h_2\rangle&=&L^1_{3,0}.
\end{array}
\end{equation}
\end{proof}

\begin{remark}\label{goodness}
A recent article \cite{chap}, which appeared just before the present version of this article, states that there are two inaccuracies in the above classification (see also the previous version of this article \cite{a2}). However, this appears to be incorrect. First, 
in \cite{chap}, it's claimed that the subalgebra  $\big\langle h_1+2h_2, x_2, x_3 \big\rangle$, which the authors denote $f_{3.2}$,  is missing from the above classification of Theorem \ref{threeds}, see also \cite{a2} (Note that we have expressed this subalgebra in the basis of $\sll$ used in this article). But, this isn't correct; the subalgebra does appear in our classification. To be specific, $f_{3.2} \sim L^{2}_{2}=\langle x_1, y_2, h_1-h_2 \rangle$, since
$G  \langle h_1+2h_2, x_2, x_3 \rangle G^{-1} = L^2_2$, where
\begin{equation}
G= \left(
\begin{array}{ccc}
-1 & 1 &0  \\
 0 & 0 & 1   \\
 0& 1 &0  
\end{array}
\right) \in \mathrm{SL}(3,\mathbb{C}).
\end{equation}

Before moving on to our second point, we mention   an additional related inaccuracy  in \cite{chap}. In \cite{chap}, the authors state that $f_{3.2}$ would be equivalent to $L^{2,-1}_{3,-\frac{1}{4}}$. However, this misidentifies the isomorphism type--which is determined by the subscript of $L$--of the subalgebra, which should
be of isomorphism type $L_2$, and not of isomorphism type $L_{3,-\frac{1}{4}}$  (see Eqs. (\ref{Eq:deg3}, \ref{goodness2}, \ref{goodness3})).

Now we move on to our second point. The  authors of \cite{chap} claim that the constraints of the following subalgebra of Theorem \ref{threeds} (see also \cite{a2}) aren't correct:
\begin{equation}
 \arraycolsep=1.4pt\def\arraystretch{1.5}
\begin{array}{llllll}
 L^{2, \alpha}_{3,\Psi(\alpha)}= \langle x_1, y_2, h_1+ \alpha h_2 \rangle, \alpha \neq \pm 1, ~\text{such that}\\
 L^{2, \alpha}_{3,\Psi(\alpha)} \sim  L^{2, \beta}_{3,\Psi(\beta)} ~\text{ if and only if}~ \alpha=\beta ~\text{or}~ \alpha \beta=1,\\
\text{ where} ~\Psi(\alpha)=-\frac{(2\alpha-1)(\alpha-2)}{9(\alpha-1)^2}.
 \end{array}
\end{equation}
However, we now illustrate that our constraints and conditions for the parameter of $ L^{2, \alpha}_{3,\Psi(\alpha)}$ are in fact correct.

To begin, note that the subalgebras corresponding to $\alpha = \pm 1$ are included in the classification (i.e., $L^2_2 =\langle x_1, y_2, h_1-h_2\rangle$ and $L^2_4=\langle x_1, y_2, h_1+h_2\rangle$).
Now, let  $\alpha \beta=1$. Then $ L^{2, \alpha}_{3,\Psi(\alpha)} \sim  L^{2, \beta}_{3,\Psi(\beta)}$, since $G(L^{2, \alpha}_{3,\Psi(\alpha)} ) G^{-1}=L^{2, \beta}_{3,\Psi(\beta)}$, where
\begin{equation}
G= \left(
\begin{array}{ccc}
0 & 0 &1  \\
 0 & 1 & 0   \\
 -1& 0 &0  
\end{array}
\right) \in \mathrm{SL}(3,\mathbb{C}).
\end{equation}

Now suppose that $L_{3, -\frac{(2\alpha-1)(\alpha-2)}{9(\alpha-1)^2}}^{2, \alpha}$ is equivalent to $L_{3, -\frac{(2\beta-1)(\beta-2)}{9(\beta-1)^2}}^{2, \beta}$, then these subalgebras are  isomorphic so that   $-\frac{(2\alpha-1)(\alpha-2)}{9(\alpha-1)^2}=-\frac{(2\beta-1)(\beta-2)}{9(\beta-1)^2}$. It follows that
\begin{equation}\label{wowgb}
 \arraycolsep=1.4pt\def\arraystretch{1.3}
\begin{array}{lllll}
(2\alpha-1)(\alpha-2)(\beta-1)^2=(2\beta-1)(\beta-2)(\alpha-1)^2.
\end{array}
\end{equation}
Expanding, and simplifying Eq. \eqref{wowgb}  yields
\begin{equation}
 \arraycolsep=1.4pt\def\arraystretch{1.5}
\begin{array}{lllll}
&\alpha^2\beta-\alpha \beta^2-\alpha+\beta=0 \\
\Rightarrow & (\alpha \beta -1)(\alpha-\beta)=0\\
\Rightarrow & \alpha \beta =1 ~\text{or}~\alpha=\beta, ~\text{as required}.
\end{array}
\end{equation}
%as required.
%Now suppose that $\alpha \neq \beta$  and $ L^{2, \alpha}_{3,\Psi(\alpha)} \sim  L^{2, \beta}_{3,\Psi(\beta)}$. Then,  direct computation using a computer algebra 
%system shows that $\alpha \beta=1$, as required.
\end{remark}

\begin{remark}
The (complexification) of the Euclidean algebra $\mathfrak{e}(2)\cong \mathfrak{so}(2) \inplus \mathbb{R}^2$ is isomorphic to $L_4$.  Hence, by Theorem \ref{threeds},  
there are precisely two nonequivalent copies of 
(the complexification of) $\mathfrak{e}(2)$
in $\sll$.  This point is also implied by the results of  the authors in \cite{drj}.  
%Note that there are inaccuracies in \cite{chap} beyond those mentioned above. But, we limit our remarks to those most relevant to the present article.
\end{remark}

%\newpage

\subsection{Four-dimensional, solvable subalgebras of $\sll$}

\begin{theorem}
A classification of the  $4$-dimensional solvable subalgebras of $\sll$, up to inner automorphism, is given by:
\begin{equation}
 \arraycolsep=1.4pt\def\arraystretch{1.2}
\begin{array}{lllllll}
M^{1}_{8} &=& \langle x_1, x_3, h_1, h_2 \rangle,\\
M^{2}_{8} &=& \langle x_1, y_2, h_1, h_2 \rangle,\\
M^{1}_{12} &=& \langle x_1, x_2, x_3,  h_1+h_2 \rangle,\\
M^{1, \alpha}_{13, \frac{(2\alpha-1)(\alpha-2)}{(\alpha+1)^2}} &=& \langle x_1, x_2, x_3, \alpha h_1+ h_2 \rangle,\\
%{\green M^{2}_{13,0} }&=& {\green \langle x_2, y_1, y_3, 2h_1+h_2  \rangle,}\\
 M^{2}_{13, 2} &=&  \langle x_1, x_2, x_3,  h_1 \rangle,\\
M^{1}_{14} &=& \langle x_1, x_2, x_3,  h_1-h_2 \rangle,\\
\end{array}
\end{equation}
where $\alpha \neq \pm 1$; and  $M^{1, \alpha}_{13, \frac{(2\alpha-1)(\alpha-2)}{(\alpha+1)^2}}$  and 
$M^{1, \beta}_{13, \frac{(2\beta-1)(\beta-2)}{(\beta+1)^2}}$  are equivalent if and only if $\alpha=\beta$. The subscripts correspond to those of the appropriate 
isomorphism type in Equation (\ref{Eq:deg32}).
\end{theorem}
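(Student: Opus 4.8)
The plan is to mirror the structure of Theorem~\ref{threeds}, using Lemma~\ref{graaff} to write any $4$-dimensional solvable subalgebra $M$ as $M = \langle z\rangle \inplus L$ for some $3$-dimensional solvable subalgebra $L$ and some $z\in M$. By Theorem~\ref{threeds}, after conjugation by an element of $SL(3,\mathbb{C})$ we may assume $L$ is one of the explicit subalgebras $L_2^1, L_2^2, L_{3,-1/4}^1, L_{3,-1/4}^2, L_{3,-2/9}^1, L_{3,0}^1, L_{3,\ast}^{1,\alpha}, L_{3,\ast}^{2,\alpha}, L_4^1, L_4^2,$ or $L_5^1$. For each such $L$ I would impose the normalizer condition $[z, L]\subseteq L$, solve the resulting linear system to find the general form of $z$ modulo $L$ (using the matrix realization in Eq.~(1)), and then use the residual stabilizer of $L$ in $SL(3,\mathbb{C})$ to put $z$ into a normal form. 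Since in most cases the normalizer of $L$ turns out to be only one or two dimensions larger than $L$ itself (or equal to $L$, in which case that branch yields no $4$-dimensional subalgebra), the list of candidates is short.

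Next, for each surviving candidate $4$-dimensional subalgebra I would identify its isomorphism type among $M_8, M_{12}, M_{13,a}, M_{14}$ from Eq.~(\ref{Eq:deg32}) by exhibiting an explicit Lie algebra isomorphism sending the abstract basis $z_1,z_2,z_3,z_4$ to concrete matrices, exactly as done in Theorem~\ref{threeds}. The parameter $a$ in the $M_{13,a}$ family must be computed in terms of the matrix entries; I expect the derived series and the action of the "grading element" $z_4$ on the nilradical to pin it down, giving the value $\tfrac{(2\alpha-1)(\alpha-2)}{(\alpha+1)^2}$ for the family $\langle x_1,x_2,x_3,\alpha h_1+h_2\rangle$, with degenerate specializations ($\alpha = 0, 2$, and the excluded $\alpha=\tfrac12$, plus $\alpha=\pm1$) peeled off and matched to $M^2_{13,0}$, $M^2_{13,2}$, $M^1_{12}$, or $M^1_{14}$ as appropriate.

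Finally I would establish that the listed subalgebras are pairwise inequivalent. Isomorphism type (which is an invariant, since a conjugation is in particular an algebra isomorphism) separates most of the list; within the $M_{13}$ family, inequivalence for distinct parameters follows from the fact that $a$ is an isomorphism invariant of $M_{13,a}$ (noted after Eq.~(\ref{Eq:deg32})), so $M^{1,\alpha}_{13,\ast}\sim M^{1,\beta}_{13,\ast}$ forces $\tfrac{(2\alpha-1)(\alpha-2)}{(\alpha+1)^2} = \tfrac{(2\beta-1)(\beta-2)}{(\beta+1)^2}$; unlike the $L_{3,a}$ case there is no $\alpha\leftrightarrow 1/\alpha$ coincidence, so a short computation should give $\alpha=\beta$ outright, and for any remaining collisions in isomorphism type one falls back on a direct $SL(3,\mathbb{C})$-conjugacy computation (e.g.\ comparing Jordan types of generic elements, or eigenvalue data of the grading element acting on the nilradical). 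The main obstacle I anticipate is the bookkeeping in Case~$L = L_{3,\ast}^{1,\alpha}$ and $L_{3,\ast}^{2,\alpha}$: here $L$ already carries a parameter, so solving $[z,L]\subseteq L$ and then reducing $z$ modulo the (now $\alpha$-dependent) stabilizer requires care to ensure one does not lose or double-count the one-parameter family $M^{1,\alpha}_{13,\ast}$, and to correctly track which special values of $\alpha$ collapse into the sporadic members of the list.
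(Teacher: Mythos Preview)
Your overall plan is exactly the paper's: write $M=\langle z\rangle\inplus L$ via Lemma~\ref{graaff}, run through the eleven $3$-dimensional cases from Theorem~\ref{threeds}, solve $[z,L]\subseteq L$, and normalize $z$ by the stabilizer of $L$ in $SL(3,\mathbb{C})$; the resulting list and the explicit isomorphisms to $M_8,M_{12},M_{13,a},M_{14}$ are produced just as you describe.

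There is, however, one concrete error. You claim that, unlike the $L_{3,a}$ situation, the parameter map $\alpha\mapsto \tfrac{(2\alpha-1)(\alpha-2)}{(\alpha+1)^2}$ has no $\alpha\leftrightarrow 1/\alpha$ coincidence, so that the isomorphism invariant $a$ of $M_{13,a}$ alone forces $\alpha=\beta$. In fact the same symmetry is present:
\[
\frac{(2\alpha-1)(\alpha-2)}{(\alpha+1)^2}=\frac{(2\beta-1)(\beta-2)}{(\beta+1)^2}
\quad\Longleftrightarrow\quad 9(\alpha-\beta)(\alpha\beta-1)=0,
\]
so the isomorphism argument only yields $\alpha=\beta$ or $\alpha\beta=1$, exactly as in Theorem~\ref{threeds}. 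What is genuinely different from the $3$-dimensional case is that the $\alpha\leftrightarrow 1/\alpha$ pair is \emph{not} realized by an inner automorphism here, and this requires the direct conjugacy computation you list only as a fallback. Concretely: the nilradical of $\langle x_1,x_2,x_3,\alpha h_1+h_2\rangle$ is $\langle x_1,x_2,x_3\rangle$ for every $\alpha\neq\pm1$, so any $A\in SL(3,\mathbb{C})$ carrying one such subalgebra to another must normalize $\langle x_1,x_2,x_3\rangle$ and hence be upper triangular; conjugation by an upper triangular matrix does not alter the diagonal part of $\alpha h_1+h_2$, whence $\alpha h_1+h_2$ and $\beta h_1+h_2$ must be proportional, i.e.\ $\alpha=\beta$. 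The paper records this step only as ``straightforward computation,'' but you should be aware that it is essential and not merely a tidying-up of residual collisions.
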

\begin{proof}
Let $M$ be a solvable $4$-dimensional subalgebra of $\sll$.  Then, by Lemma \ref{graaff}, 
\begin{equation}
M=\langle z \rangle \inplus L,
\end{equation}
where $L$ is a $3$-dimensional solvable subalgebra, and $z\in M$.  By Theorem \ref{threeds}, we have the following cases.

\vspace{2mm}

\noindent \underline{Case 1}.  $M=\langle z\rangle \inplus  L^1_{3,-\frac{2}{9}}$ $=$ $\langle z\rangle \inplus \langle x_1+x_2, x_3, h_1+h_2 \rangle$.  
Then, $z$ is a linear combination of $x_1+x_2$, $x_3$, and $h_1+h_2$, which is a contradiction to the dimension of $M$.  Hence, this case does not yield a subalgebra.

\vspace{2mm}

\noindent \underline{Case 2}.  $M=\langle z\rangle \inplus  L^1_{3,0}$ $=$ $\langle z\rangle \inplus \langle x_1, h_1, h_2 \rangle$.  
Then, $z$ is a linear combination of $x_1$, $h_1$, and $h_2$, which is a contradiction to the dimension of $M$.  Hence, this case does not yield a subalgebra.

\vspace{2mm}

\noindent \underline{Case 3}.  $M=\langle z\rangle \inplus  L^{1, \alpha}_{3,-\frac{(2\alpha-1)(\alpha-2)}{9(\alpha-1)^2}}$ $=$ $\langle z\rangle \inplus \langle x_1, x_3, (\alpha-1)h_1+\alpha h_2 \rangle$, $\alpha \neq \pm 1$.  
Then, $M=\langle x_1, x_3, h_1, h_2\rangle$, which is isomorphic to $M_{8}$ via the isomorphism given by
\begin{equation}
 \arraycolsep=1.4pt\def\arraystretch{1.3}
\begin{array}{llllll}
z_1 &\mapsto & \frac{1}{3}(h_1-h_2),\\
z_2 &\mapsto & x_1,\\
z_3 &\mapsto & \frac{1}{3}(h_1+2h_2),\\
z_4 & \mapsto & x_3.
\end{array}
\end{equation}
Define
\begin{equation}
M^{1}_{8} \equiv \langle x_1, x_3, h_1, h_2 \rangle.
\end{equation}

\vspace{2mm}

\noindent \underline{Case 4}.  $M=\langle z\rangle \inplus  L^{2, \alpha}_{3,-\frac{(2\alpha-1)(\alpha-2)}{9(\alpha-1)^2}}$ $=$ $\langle z\rangle \inplus \langle x_1, y_2, h_1+\alpha h_2 \rangle$, $\alpha \neq \pm 1$.  
Then, $M=\langle x_1, y_2, h_1, h_2\rangle$, which is isomorphic to $M_{8}$ via the isomorphism given by
\begin{equation}
 \arraycolsep=1.4pt\def\arraystretch{1.3}
\begin{array}{llllll}
z_1 &\mapsto & -\frac{1}{3}(h_1+2h_2),\\
z_2 &\mapsto & y_2,\\
z_3 &\mapsto & \frac{1}{3}(2h_1+h_2),\\
z_4 & \mapsto & x_1.
\end{array}
\end{equation}
Define
\begin{equation}
M^{2}_{8} \equiv \langle x_1, y_2, h_1, h_2 \rangle.
\end{equation}
Straightforward computation establishes that $M^{1}_{8}$ and $M^{2}_{8}$ are inequivalent.

\vspace{2mm}

\noindent \underline{Case 5}.  $M=\langle z\rangle \inplus  L^1_{4}$ $=$ $\langle z\rangle \inplus \langle x_1, x_3, h_2 \rangle$.  
Then, $M=\langle x_1, x_3, h_1, h_2\rangle=M^{1}_8$.

\vspace{2mm}

\noindent \underline{Case 6}.  $M=\langle z\rangle \inplus  L^2_{4}$ $=$ $\langle z\rangle \inplus \langle x_1, y_2, h_1+h_2 \rangle$.  
Then, $M=\langle x_1, y_2, h_1, h_2\rangle=M^{2}_8$.

\vspace{2mm}

\noindent \underline{Case 7}.  $M=\langle z\rangle \inplus  L^2_{4}$ $=$ $\langle z\rangle \inplus \langle x_1, x_2, x_3 \rangle$.  
Then,  $M=\langle x_1, x_2, x_3, \alpha h_1+ h_2\rangle$ or $\langle x_1, x_2, x_3,  h_1+ \alpha h_2\rangle$, for some $\alpha \in \mathbb{C}$. 
Up to equivalence, this reduces to $\langle x_1, x_2, x_3, \alpha h_1+ h_2\rangle$ for $\alpha \in \mathbb{C}$ or $\langle x_1, x_2, x_3,  h_1\rangle$.

For $\alpha \neq \pm 1$, $\langle x_1, x_2, x_3, \alpha h_1+ h_2\rangle$ is isomorphic to $M_{13, \frac{(2\alpha-1)(\alpha-2)}{(\alpha+1)^2}}$ via the isomorphism given by
\begin{equation}
 \arraycolsep=1.4pt\def\arraystretch{1.3}
\begin{array}{llllll}
z_1 &\mapsto & \frac{2\alpha-1}{\alpha+1}x_1-\frac{\alpha-2}{\alpha+1}x_2,\\
z_2 &\mapsto & \frac{3(\alpha-1)}{\alpha+1}x_3,\\
z_3 &\mapsto & x_1+x_2,\\
z_4 & \mapsto & \frac{1}{\alpha+1}(\alpha h_1+h_2).
\end{array}
\end{equation}
For $\alpha \neq \pm 1$, define
\begin{equation}
M^{1, \alpha}_{13, \frac{(2\alpha-1)(\alpha-2)}{(\alpha+1)^2}} \equiv \langle x_1, x_2, x_3, \alpha h_1+ h_2 \rangle.
\end{equation}
Direct, straightforward computation establishes that $M^{1, \alpha}_{13, \frac{(2\alpha-1)(\alpha-2)}{(\alpha+1)^2}}$  and 
$M^{1, \beta}_{13, \frac{(2\beta-1)(\beta-2)}{(\beta+1)^2}}$  are equivalent if and only if $\alpha=\beta$.

The subalgebra $\langle x_1, x_2, x_3, h_1\rangle$ is isomorphic to $M_{13, 2}$ via the isomorphism given by
\begin{equation}
 \arraycolsep=1.4pt\def\arraystretch{1.3}
\begin{array}{llllll}
z_1 &\mapsto & 2x_1-x_2,\\
z_2 &\mapsto & 3x_3,\\
z_3 &\mapsto &x_1+x_2,\\
z_4 & \mapsto & h_1.
\end{array}
\end{equation}
Define
\begin{equation}
M^{2}_{13, 2} \equiv \langle x_1, x_2, x_3,  h_1 \rangle.
\end{equation}

Then
$M^{2}_{13, 2}$ and $M^{1, \alpha}_{13, \frac{(2\alpha-1)(\alpha-2)}{(\alpha+1)^2}}$ are isomorphic precisely when $\alpha=0$.  Straightforward computation
shows that $M^{2}_{13, 2}$ and $M^{1, 0}_{13, 2}$ are inequivalent.

The subalgebra $\langle x_1, x_2, x_3, h_1+h_2\rangle$ is isomorphic to $M_{12}$ via the isomorphism given by
\begin{equation}
 \arraycolsep=1.4pt\def\arraystretch{1.3}
\begin{array}{llllll}
z_1 &\mapsto & 2x_1+x_2,\\
z_2 &\mapsto & x_3,\\
z_3 &\mapsto & x_1+x_2,\\
z_4 & \mapsto & h_1+h_2.
\end{array}
\end{equation}
Define
\begin{equation}
M^{1}_{12} \equiv \langle x_1, x_2, x_3,  h_1+h_2 \rangle.
\end{equation}

The subalgebra $\langle x_1, x_2, x_3, h_1-h_2\rangle$ is isomorphic to $M_{14}$ via the isomorphism given by
\begin{equation}
 \arraycolsep=1.4pt\def\arraystretch{1.3}
\begin{array}{llllll}
z_1 &\mapsto & x_1-x_2,\\
z_2 &\mapsto & 2x_3,\\
z_3 &\mapsto & x_1+x_2,\\
z_4 & \mapsto & \frac{1}{3}(h_1-h_2).
\end{array}
\end{equation}
Define
\begin{equation}
M^{1}_{14} \equiv \langle x_1, x_2, x_3,  h_1-h_2 \rangle.
\end{equation}

\vspace{2mm}

\noindent \underline{Case 8}.  $M=\langle z\rangle \inplus  L^1_{3,-\frac{1}{4}}$ $=$ $\langle z\rangle \inplus \langle x_1, x_3, 2h_1+h_2+x_2 \rangle$.  
Then, $M=\langle x_1, x_2, x_3, 2h_1+h_2\rangle=M^{1,2}_{13,0}$.

\vspace{2mm}

\noindent \underline{Case 9}.  $M=\langle z\rangle \inplus  L^2_{3,-\frac{1}{4}}$ $=$ $\langle z\rangle \inplus \langle y_1, y_3, 2h_1+h_2+x_2 \rangle$.  
Then, $M=\langle x_2, y_1, y_3, 2h_1+h_2\rangle$, which is isomorphic to $M_{13, 0}$ via the isomorphism given by
\begin{equation}
 \arraycolsep=1.4pt\def\arraystretch{1.3}
\begin{array}{llllll}
z_1 &\mapsto & y_3,\\
z_2 &\mapsto & -y_1,\\
z_3 &\mapsto & x_2+y_3,\\
z_4 & \mapsto & -\frac{1}{3}(2h_1+h_2).
\end{array}
\end{equation}
Straightforward computation establishes that $\langle x_2, y_1, y_3, 2h_1+h_2  \rangle$ is equivalent to $M^{1,\frac{1}{2}}_{13, 0}$.
See comments after this proof for details.

\vspace{2mm}

\noindent \underline{Case 10}.  $M=\langle z\rangle \inplus  L^1_2$ $=$ $\langle z\rangle \inplus \langle x_1, x_3, 2h_1+h_2 \rangle$.  
Then, $M=\langle x_1, x_3, 2h_1+h_2, ax_2+by_2+ch_1+dh_2\rangle$, for $a, b, c, d \in \mathbb{C}$, not all zero.  In matrix form $ax_2+by_2+ch_1+dh_2$ is equal to  the block matrix
\begin{equation}
ax_2+by_2+ch_1+dh_2= \left(
\begin{array}{ccc}
c & 0 & 0   \\
 0 & -c+d & a   \\
 0& b &-d  
\end{array}
\right).
\end{equation}

\vspace{2mm}

\noindent \underline{Case 10.1}.  $4ab+c^2-4cd+4d^2\neq 0$.   The eigenvalues of the lower right $2\times 2$ block of $ax_2+by_2+ch_1+dh_2$ are 
\begin{equation}
 \arraycolsep=1.4pt\def\arraystretch{1.3}
\begin{array}{llll}
-\frac{1}{2}c+\frac{1}{2}\sqrt{4ab+c^2-4cd+4d^2}, ~\text{and}\\
-\frac{1}{2}c-\frac{1}{2}\sqrt{4ab+c^2-4cd+4d^2},
\end{array}
\end{equation}
which are distinct for $4ab+c^2-4cd+4d^2\neq 0$.  Hence,  $ax_2+by_2+ch_1+dh_2$ is diagonalizable.   Note that the lower right $2\times 2$ blocks of $2h_1+h_2$ and $ax_2+by_2+ch_1+dh_2$ commute.  Hence, the lower right $2\times 2$ blocks of these elements are simultaneously diagonalizable, via conjugation by some     $G\in \mathrm{GL}(2, \mathbb{C})$.  If we set
\begin{equation}
A= \left(
\begin{array}{ccc}
\frac{1}{\det(G)} & 0    \\
 0 &  G
\end{array}
\right) \in \mathrm{SL}(3, \mathbb{C}),
\end{equation}
then $A^{-1}\langle x_1, x_3\rangle A=\langle x_1, x_3 \rangle$, so that $A^{-1}\langle x_1, x_3, 2h_1+h_2, ax_2+by_2+ch_1+dh_2\rangle A$$=$$\langle x_1, x_3, h_1, h_2 \rangle$
$=$ $M^1_8$.

\vspace{2mm}

\noindent \underline{Case 10.2}.  $4ab+c^2-4cd+4d^2= 0$.  Then $d= \frac{1}{2}c\pm \sqrt{-ab}$.

\vspace{2mm}

\noindent \underline{Case 10.2.1}. $b\neq 0$.  Let
\begin{equation}
G= \left(
\begin{array}{ccc}
\pm \sqrt{-ab} & 1    \\
 b &  0
\end{array}
\right)\in \mathrm{GL}(2, \mathbb{C}),
\end{equation}
where the sign in $G$ varies with the sign in $d$.  Then conjugation by
\begin{equation}
A= \left(
\begin{array}{ccc}
\frac{1}{\det(G)} & 0    \\
 0 &  G
\end{array}
\right) \in \mathrm{SL}(3, \mathbb{C}),
\end{equation}
yields $\langle x_1, x_3, 2h_1+h_2, ch_1+\frac{1}{2}ch_2+x_2\rangle$ $=$ $\langle x_1, x_2, x_3, 2h_1+h_2 \rangle$$=$ $M^{1,2}_{13, 0}$.

\vspace{2mm}

\noindent \underline{Case 10.2.2}. $b= 0$.  Then $M =\langle x_1, x_3, 2h_1+h_2, ch_1+\frac{1}{2}ch_2+a x_2\rangle$, and, by dimension considerations, $a\neq 0$.  Hence, 
$M=\langle x_1, x_2, x_3, 2h_1+h_2 \rangle$ $=$ $M^{1,2}_{13, 0}$.

\vspace{2mm}

\noindent \underline{Case 11}.  $M=\langle z\rangle \inplus  L^2_2$ $=$ $\langle z\rangle \inplus \langle x_1, y_2, h_1-h_2 \rangle$.  
Then, $M=\langle x_1, y_2, h_1-h_2, ax_3+by_3+ch_1+dh_2\rangle$, for $a, b, c, d \in \mathbb{C}$, not all zero.  In matrix form,        
$ax_3+by_3+ch_1+dh_2$ is equal to 
\begin{equation}
ax_3+by_3+ch_1+dh_2= \left(
\begin{array}{ccc}
c & 0 & -a   \\
 0 & -c+d & 0   \\
 -b& 0 &-d  
\end{array}
\right).
\end{equation}

\vspace{2mm}

\noindent \underline{Case 11.1}.  $4ab+c^2+2cd+d^2\neq 0$.  The eigenvalues of the $2\times 2$ matrix formed from the outer corners of $ax_3+by_3+ch_1+dh_2$ are 
\begin{equation}
 \arraycolsep=1.4pt\def\arraystretch{1.3}
\begin{array}{llll}
\frac{1}{2}c-\frac{1}{2}d+\frac{1}{2}\sqrt{4ab+c^2+2cd+d^2}, ~\text{and}\\
\frac{1}{2}c-\frac{1}{2}d-\frac{1}{2}\sqrt{4ab+c^2+2cd+d^2},
\end{array}
\end{equation}
which are distinct for $4ab+c^2+2cd+d^2\neq 0$.  Hence,  $ax_3+by_3+ch_1+dh_2$ is diagonalizable.
Note that the  $2\times 2$ matrices formed from the corners  of $h_1-h_2$ and $ax_3+by_3+ch_1+dh_2$ commute.  Hence, the  $2\times 2$ matrices formed from the corners of these elements are simultaneously diagonalizable, via conjugation by some   $G=[g_{ij}] \in \mathrm{GL}(2, \mathbb{C})$.  If we set
\begin{equation}
A= \left(
\begin{array}{ccc}
g_{11}& 0 & g_{12} \\
0& \frac{1}{\det(G)} & 0    \\
 g_{21} & 0 & g_{22}  
\end{array}
\right) \in \mathrm{SL}(3, \mathbb{C}),
\end{equation}
then $A^{-1}\langle x_1, y_2\rangle A=\langle x_1, y_2 \rangle$, so that $A^{-1}\langle x_1, y_2, h_1-h_2, ax_3+by_3+ch_1+dh_2\rangle A$$=$$\langle x_1, y_2, h_1, h_2 \rangle$
$=$ $M^{2}_{8}$.

\vspace{2mm}

\noindent \underline{Case 11.2}.  $4ab+c^2+2cd+d^2= 0$.  Then $d=-c\pm2\sqrt{-ab}$.

\vspace{2mm}

\noindent \underline{Case 11.2.1}. $b\neq 0$.  Let
\begin{equation}
G=[g_{ij}]= \left(
\begin{array}{ccc}
\pm \sqrt{-ab} & 1    \\
-b &  0
\end{array}
\right)\in \mathrm{GL}(2, \mathbb{C}),
\end{equation}
where the sign in $G$ varies with the sign in $d$.  Then conjugation by
\begin{equation}
A= \left(
\begin{array}{ccc}
g_{11}& 0 & g_{12} \\
0& \frac{1}{\det(G)} & 0    \\
 g_{21} & 0 & g_{22}  
\end{array}
\right) \in \mathrm{SL}(3, \mathbb{C}),
\end{equation}
yields $\langle x_1, y_2, h_1-h_2, (c \mp \sqrt{-ab}) h_1-(c\mp   \sqrt{-ab})h_2+x_3\rangle$ $=$ $\langle x_1, x_3, y_2, h_1-h_2 \rangle$,  which is equivalent to $M^{1,\frac{1}{2}}_{13,0}$. 

\vspace{2mm}

\noindent \underline{Case 11.2.2}.  $b=0$. Then $M =\langle x_1, y_2, h_1-h_2, ch_1-ch_2+a x_3\rangle$, and, by dimension considerations, $a\neq 0$.  Hence, 
$M=\langle x_1, x_3, y_2, h_1-h_2 \rangle$,  which is equivalent to $M^{1,\frac{1}{2}}_{13,0}$. 
\end{proof}

Note that in the above theorem,  we addressed one oversight from the previous version of this article \cite{a2}. Specifically, the previous version contained the following two four-dimensional, solvable subalgebras: 
$\big\langle x_1, x_2, x_{3}, \frac{1}{2}h_1+h_2  \big\rangle$ and $\langle x_2, y_1, y_{3}, 2h_1+h_2 \rangle$. However, these subalgebras
are conjugate in $\mathrm{SL}_3(\mathbb{C})$.  In particular, $G\big\langle x_1, x_2, x_3,  \frac{1}{2}h_1+h_2 \big\rangle G^{-1}=\langle x_2, y_1, y_3, 2h_1+h_2 \rangle$, where
\begin{equation}
G=\left(
\begin{array}{rrr}
0&0&1 \\
-1&0&0 \\
0&-1&0 
\end{array}
\right)\in \mathrm{SL}(3,\mathbb{C}).
\end{equation} The equivalence of these subalgebras seems to also have been noted in \cite{chap}. In the theorem  above, we exclude the later subalgebra.

\subsection{Five-dimensional, solvable subalgebras of $\sll$}

Since simple Lie algebras have unique, up to inner automorphism, maximal solvable subalgebras, Borel subalgebras, we have the following theorem.
\begin{theorem}
The unique $5$-dimensional solvable subalgebra of $\sll$, up to inner automorphism, is a Borel subalgebra:
\begin{equation}
B
\equiv \langle x_1,x_2, x_3, h_1, h_2\rangle.
\end{equation}
\end{theorem}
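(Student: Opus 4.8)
The plan is to deduce the theorem from the structure theory of semisimple Lie algebras, exactly along the lines hinted at just above the statement. Let $S$ be a $5$-dimensional solvable subalgebra of $\sll$. Since every solvable subalgebra of a finite-dimensional Lie algebra is contained in a maximal solvable one, we may choose a maximal solvable subalgebra $\mathfrak{b}$ with $S \subseteq \mathfrak{b}$. For a semisimple Lie algebra over $\mathbb{C}$ the maximal solvable subalgebras are precisely the Borel subalgebras, and for $\sll$ a Borel subalgebra has dimension $\dim\mathfrak{h}+|\Phi^+| = 2+3 = 5$. Hence $S \subseteq \mathfrak{b}$ with $\dim S = \dim\mathfrak{b} = 5$, forcing $S = \mathfrak{b}$; that is, every $5$-dimensional solvable subalgebra is a Borel subalgebra. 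Since all Borel subalgebras of $\sll$ are conjugate under $Inn(\sll)$, and since $B = \langle x_1, x_2, x_3, h_1, h_2\rangle$ is itself a Borel subalgebra — being the traceless upper-triangular matrices, which is $5$-dimensional and solvable with derived algebra the strictly upper-triangular (nilpotent) part — we conclude $S \sim B$, which also settles existence.

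A second, more self-contained route uses Lie's theorem directly. Viewing $S$ inside $\mathfrak{gl}(3,\mathbb{C})$ acting on $V=\mathbb{C}^3$, Lie's theorem gives $g \in GL(3,\mathbb{C})$ with $g^{-1}Sg$ consisting of upper-triangular matrices, hence $g^{-1}Sg \subseteq B$; the dimension count $\dim(g^{-1}Sg) = 5 = \dim B$ then gives $g^{-1}Sg = B$. Finally one observes that conjugation by an element of $GL(3,\mathbb{C})$ induces an inner automorphism of $\sll$: scalar matrices act trivially by conjugation, so $GL(3,\mathbb{C})$ and $SL(3,\mathbb{C})$ induce the same automorphism group on $\sll$, namely $Inn(\sll)$. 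Thus $S \sim B$ in the sense used throughout the paper.

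I do not expect a serious obstacle here; essentially the whole proof is a dimension count ($\dim \sll = 8$, $\dim B = 5$) bracketed by two standard invocations. The only point requiring genuine care is the transition from the ambient statements — conjugacy of Borel subalgebras, or Lie's theorem, both of which are naturally phrased for $GL(3,\mathbb{C})$ or the adjoint group — to the classification "up to inner automorphism" asserted in the theorem; this is exactly the remark that $GL(3,\mathbb{C})$-conjugacy of subalgebras of $\sll$ coincides with $Inn(\sll)$-equivalence, which should be stated explicitly.
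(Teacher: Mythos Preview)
Your proposal is correct and your first route is exactly the paper's argument: the paper's entire proof is the one-sentence observation preceding the theorem that maximal solvable subalgebras of a simple Lie algebra are the Borel subalgebras and are unique up to inner automorphism, so a $5$-dimensional solvable subalgebra must equal a Borel. You supply considerably more detail than the paper (the explicit dimension count, the check that $B$ is Borel, and the $GL(3,\mathbb{C})$ versus $SL(3,\mathbb{C})$ remark), and your second route via Lie's theorem is a pleasant self-contained alternative that avoids invoking the Borel conjugacy theorem.
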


\section{Conclusions}

A classification of the semisimple subalgebras of 
$A_{2}$, 
the Lie algebra of traceless $3\times 3$ matrices with complex entries, 
is well-known.  In this article, we classified  the solvable and Levi decomposable subalgebras of $\sll$, up to inner automorphism.  By Levi's Theorem, this completes the  classification of the subalgebras of $\sll$.  The classification is summarized in Table \ref{enddd}.

\begin{landscape}
\begin{small}
\begin{table} [!h]\renewcommand{\arraystretch}{1.5} \caption{Classification of subalgebras of $\sll$, up to inner automorphism.} \label{enddd}\begin{center}
\begin{tabular}{|c|c|c|c|c|c|c|} 
\hline
Dimension &   Semisimple  & Levi decomposable & Solvable\\
\hline \hline
1 & None & None &  $J^1$,  $J^{2}$, $J^3$, $J^{4,\alpha}$ \\
 &  &  &  where  $J^{4,\alpha} \sim  J^{4,\beta}$ if and only if $\alpha = 1-\beta$,  $\alpha =\beta$, \\
&&& $\alpha =(-1+\alpha)\beta$,  $\alpha =-(-1+\alpha)(-1+\beta)$,  \\
&&& $\alpha \beta=1$, or $-\alpha (-1+\beta)=1$.
 \\
\hline
2 & None& None &$K^i_1$, $1\leq i \leq 5$, $K^1_2$, $K^2_2$, $K^3_2$, $K^{4,\alpha}_2$ \\
 &  &  &   where $K^{4,\alpha}_2 \sim K^{4,\beta}_2$ if and only if $\alpha=\beta$.\\
\hline
3 & $A_1^1, A_1^2$ & None &  $L^1_2$, $L^2_2$,  $L^1_{3, -\frac{1}{4}}$, $L^2_{3, -\frac{1}{4}}$, $L^1_{3, -\frac{2}{9}}$, $L^1_{3,0}$, $L^1_4$, $L^2_4$,  $L^1_5$, \\
&&& $L^{1,\alpha}_{3, -\frac{(2\alpha-1)(\alpha-2)}{9(\alpha-1)^2}}$,
$L^{2,\alpha}_{3, -\frac{(2\alpha-1)(\alpha-2)}{9(\alpha-1)^2}}$, $\alpha \neq \pm 1$,\\
&&& where, for $i=1, 2$,   $L^{i,\alpha}_{3, -\frac{(2\alpha-1)(\alpha-2)}{9(\alpha-1)^2}} \sim L^{i,\beta}_{3, -\frac{(2\beta-1)(\beta-2)}{9(\beta-1)^2}}$\\
&&& if and only if $\alpha=\beta$ or $\alpha\beta=1$.\\
\hline
4 & None & $(A_1 \oplus_{} J)^1 $&  $M^{1}_{8}$, $M^{2}_{8}$, $M^{1}_{12}$,  $M^{2}_{13, 2}$, $M^{1}_{14}$, $M^{1, \alpha}_{13, \frac{(2\alpha-1)(\alpha-2)}{(\alpha+1)^2}}$, $\alpha \neq \pm 1$,\\
&&& where $M^{1, \alpha}_{13, \frac{(2\alpha-1)(\alpha-2)}{(\alpha+1)^2}}$ $\sim$ $M^{1, \beta}_{13, \frac{(2\beta-1)(\beta-2)}{(\beta+1)^2}}$ if and only if $\alpha=\beta$.\\
\hline
5 & None & $(A_1 \inplus_{\varphi_1} K_1)^1 $,  $(A_1 \inplus_{\varphi_1} K_1)^2$&  
$B$
\\
\hline
6 & None & $(A_1 \inplus_{\varphi_2} L_{2})^1 $, $(A_1 \inplus_{\varphi_2} L_{2})^2$& None \\
\hline
\end{tabular}\end{center}
\end{table}
\end{small}
\end{landscape}

\section*{Acknowledgements}

The work of A.D. was partially supported by research grants from the Professional Staff
Congress/City University of New York (Grant Nos. TRADA-46-165, TRADA-45-174, and TRADA-44-101). The
work of J.R. was partially supported by the Natural Sciences and Engineering Research Council
(Grant No. 3166-09).

\end{document}